\theoremstyle{definition}
\newtheorem{theorem}{Theorem}
\theoremstyle{definition}
\newtheorem{lemma}[theorem]{Lemma}
\theoremstyle{definition}
\newtheorem{corollary}[theorem]{Corollary}
\theoremstyle{definition}
\newtheorem{proposition}[theorem]{Proposition}
\theoremstyle{definition}
\newtheorem{fact}[theorem]{Fact}
\theoremstyle{definition}
\newtheorem{remark}[theorem]{Remark}
\theoremstyle{definition}
\newtheorem{example}[theorem]{Example}
\theoremstyle{definition}
\newtheorem{question}[theorem]{Question}
\numberwithin{equation}{section}
\numberwithin{equation}{section}
\DeclareMathOperator{\supp}{supp}
\DeclareMathOperator*{\essinf}{infess}
\newcommand{\norm}[1]{\left\lVert#1\right\rVert}
\newcommand{\abs}[1]{\left\lvert#1\right\rvert}
\begin{document}
\title{Local approach to order continuity in Ces\`aro function spaces}
\author{Tomasz Kiwerski and Jakub Tomaszewski}


\maketitle
\vspace{-9mm}

\begin{abstract}
The goal of this paper is to present a complete characterization of
points of order continuity in abstract Ces\`aro function spaces $CX$ for $X$ being a symmetric function space.
Under some additional assumptions mentioned result takes the form $(CX)_a = C(X_a)$.
We also find simple equivalent condition for this equality which in the case of $I=[0,1]$ comes to $X\neq L^\infty$.
Furthermore, we prove that $X$ is order continuous if and only if $CX$ is, under assumption that the Ces\`aro operator
is bounded on $X$.
This result is applied to particular spaces, namely: Ces\`aro-Orlicz function spaces, Ces\`aro-Lorentz function spaces and Ces\`aro-Marcinkiewicz function spaces to get criteria for OC-points.
\end{abstract}

\vspace{-9mm}

\renewcommand{\thefootnote}{\fnsymbol{footnote}}
\footnotetext[0]{
2010 \textit{Mathematics Subject Classification}: 46B20, 46B42, 46E30.\\
\textit{Key words and phrases}: Ces\`aro function spaces; Ces\`aro-Orlicz function spaces;
Ces\`aro-Lorentz function spaces; Ces\`aro-Marcinkiewicz function spaces; order continuity;
local structure of a separated point.}

\bigskip
\bigskip
\bigskip
\section{Introduction}

Much has been said about the Ces\`aro spaces both from isomorphic and isometric point of view, see for example the papers
\cite{AM09}, \cite{AM14} by Astashkin and Maligranda, \cite{CR16} by Curbera and Ricker, \cite{DS07} by Delgado and Soria and \cite{LM15a}, \cite{LM15b}, \cite{LM15p} by Le\'{s}nik and Maligranda and references given there.

For a Banach ideal space $X$ the abstract Ces\`aro space $CX$ is the space of all functions $f$ such that $C|f| \in X$, equipped with
the norm $\norm{f}_{CX} = \norm{C|f|}_{X}$, where $C$ denotes the continuous Ces\`aro operator
$$C : f \mapsto Cf(x) := \frac{1}{x}\int_0^x f(t)\text{d}t.$$
General considerations for this construction has been initiated in \cite{LM15a}. The abstract Ces\`aro spaces $CX$ are neither symmetric nor reflexive. Surprisingly, the descriptions of Kothe duals of $CX$ spaces is different for the case of $I = [0,1]$
and $I = [0,\infty)$, see \cite{LM15a}.

Here we will focus on considering the abstract Ces\`aro spaces $CX$ for those function spaces $X$ which are symmetric. We study the local structure of this spaces in the terms of order continuity property.

The paper is organized as follows.

In Section 2 we collect some necessary preliminaries on Banach ideal spaces, symmetric function spaces, Ces\`aro function spaces and order continuity property. Here, we specify Theorems A, B, C, D and E, Fact 1 and Lemma 2 because we will used them often in this article.

Section 3 contains the main results of this paper. Curbera and Ricker in \cite{CR16} proved that $(CX)_a = C(X_a)$ for symmetric spaces $X \neq L^\infty[0,1]$ on $I=[0,1]$. Moreover, Kiwerski and Kolwicz in \cite{KK16} have shown analogous equality in the case of the Ces\`aro-Orlicz function spaces $Ces_\Phi := CL^\Phi$, see also Remark \ref{dyskusja} for a more accurate discussion. We extend these results to the class of symmetric spaces and we get a full characterization of order continuous points in abstract Ces\`aro function spaces $CX$.

In the last Section 4, we show applications of our characterization for particular cases of symmetric spaces. Some results from this section were proved earlier directly for $X = L^p$ by Hassard and Hussein in \cite{HH73}, Shiue in \cite{Sh70}, for $X = L^\infty$ by Zaanen in \cite{Za83} and for $X = L^\Phi$ by Kiwerski and Kolwicz in \cite{KK16}.

\bigskip
\section{Notation and preliminaries}

 Denote by $m$ the
Lebesgue measure on $I$ and by $L^{0}=L^{0}(I)$ the space of all classes of
real-valued Lebesgue measurable functions defined on $I$, where $I=[0,1]$ or 
$I=[0,\infty )$. Through all the paper when we pick a subset $A \subset I$ we assume that $A$ is a Lebesgue measurable set.
For a subset $A\subset I$ we define the essential infimum $\essinf(A)$ of $A$ as follows
$$\essinf(A):=\inf\{\epsilon\geq 0:m([0,\epsilon)\cap A)=0\}.$$

A Banach space $E = (E,\Vert \cdot \Vert )$ is said to be a Banach ideal space
on $I$ (we write $E[0,1]$ or $E[0,\infty)$) if $E$ is a linear subspace of $L^{0}(I)$ and satisfies the
condition that if $g\in E$, $f\in L^{0}$ and $|f|\leq |g|$ a.e. on $I$ then $f\in E$
and $\Vert f\Vert \leq \Vert g\Vert $. Sometimes we write $\left\| \cdot\right\|_{E}$ to be sure in which
space the norm has been taken. We say that $E$ is non-trivial if $E\neq \{0\}$. 


For $f\in L^0$ we define support as $\supp(f):=\{t\in I : f(t)\neq 0\}.$
Recall that the support $\supp(X)$ of  Banach ideal space $X$ is defined as measurable subset of $I$ such that  $m(\supp(f)\setminus\supp(X)) = 0$ for every $f\in X$, and for every $U\subset\supp(X)$ of finite positive measure we have $\chi_U\in X$.

For two Banach spaces $E$ and $F$ on $I$ the symbol $E \hookrightarrow
F$ means that the embedding $E\subset F$ is continuous, i.e. there exists a
constant $A>0$ (we call it the embedding constant) such that $\left\| f\right\|_{F} \le A\left\| f\right\|_{E}$
for all $f\in E$. Recall that for two Banach ideal spaces $E$ and $F$ the embedding $E \subset F$ is always continuous. 
 Moreover, $E=F$ (resp. $E\equiv F$) means that the spaces are the same as the sets and the norms are equivalent (resp. equal). By $E\simeq F$ we denote the fact that the Banach spaces $E$ and $F$ are isomorphic.

For function $f\in L^{0}$ we define distribution functions as $d_f(\lambda) := m(\{t \in I : |f(t)| > \lambda\})$ for $\lambda \geq 0$. We say that two functions  $f,g \in L^{0}(I)$  are equimeasurable 
when they have the same distribution functions, i.e. $d_f \equiv d_y$.
By a symmetric function space (symmetric Banach function space or rearrangement invariant
Banach function space) on $I$ we mean a Banach ideal space $E = (E,\norm{\cdot}_{E})$ with
the additional property that for any two equimeasurable functions  $f,g \in L^{0}(I)$ if $f\in E$ then $g \in E$ and $\norm{f}_{E} = \norm{g}_{E}$ (we also accept the convention to write "symmetric space" within the meaning of "symmetric function space" because
in this paper we focus only on the consideration of this case). In particular, $\norm{f}_{E} = \norm{f^{*}}_{E}$, where $f^{*}(t) := \inf\{\lambda > 0 : d_f(\lambda) \leq t \}$ for $t \geq 0$. Note that, if a symmetric function space $E$ on $I$ is non-trivial then $\supp(E) = I$. For the theory of symmetric spaces the reader is referred to \cite{BS88} and \cite{KPS78}.

For a symmetric function space $E$ on $I$ its fundamental function $\varphi_E$ is
defined by the formula
$$\varphi_E(t) := \norm{\chi_{[0,t]}}_{E},$$
for $0 < t \in I$. Writing $\varphi_E(0^{+})$ or $\varphi_E(\infty)$ we
understand $\lim\limits_{t\rightarrow 0^{+}} \varphi_E(t)$ or $\lim\limits_{t\rightarrow \infty} \varphi_E(t)$, respectively.

A point $f\in X$ is said to have an order continuous norm ($f$ is an OC-point) if for any sequence $(f_n)\subset X$
with $0 \leq f_n \leq |f|$ and $f_n \rightarrow 0$ a.e. on $I$, we have $\norm{f_n}_{X} \rightarrow 0$.
By $X_{a}$ we denote the subspace of all order continuous
elements of $X$. A Banach ideal
space $X$ is called order continuous (we write $X\in (OC)$ for short) if every element of $X$ has an order continuous
norm, i.e. if $X_a = X$. It is worth to notice that in case of Banach ideal spaces
on $I$, $f\in X_{a}$ if and only if $\norm{f\chi _{A_{n}}}_{X} \rightarrow 0$ for any sequence $(A_n)$ satisfying $A_n \downarrow \emptyset$ (that is, decreasing sequence of Lebesgue measurable sets $A_{n} \subset I$ with intersection of measure zero, see \cite[Proposition 3.5, p. 15]{BS88}). The subspace $X_a$ is always closed, see \cite[Th. 3.8, p. 16]{BS88}.
Characterization of order continuity given in Theorem A (iii) and (iv) is well known.

\medskip
\noindent
\textbf{Theorem A.} \label{OC*}
\begin{enumerate}
\item (\cite[Lemma 2.6]{CKP14}) Let $E$ be symmetric space. Then $f\in E$ is a point of order continuity if and only if $f^*$ is also.
\item (\cite[Lemma 2.5]{AP02}, \cite[Lemma 2.5]{CKP14}, cf. \cite{KPS78}) Let $E$ be symmetric space. Then $f\in E_a$ if and only if
$$\norm{f^*\chi_{[0,\frac{1}{n})}}_{E}\rightarrow 0 \quad\text{ and } \norm{f^*\chi_{[n,\infty)}}_{E}\rightarrow 0
 \quad\text{ for } n\rightarrow\infty.$$
In particular if $f$ is a point of order continuity then $f^{*}(\infty) = 0$.
\item (\cite{Lo69}) A Banach ideal space $E$ is order continuous if and only if $E$ contains no
isomorphic copy of $l^\infty$.
\item (\cite[Th. 5.5, p. 27]{BS88}) A Banach ideal space $E$ is order continuous if and only if is separable.
\end{enumerate}$\hfill\ensuremath{\square}$
\medskip

Let $X$ be a Banach ideal space. The closure in $X$ of the set of simple functions is denoted by $X_b$. It is well known, that
the subspace $X_b$ is the closure in $X$ of the set of bounded functions supported in sets of finite measure, cf. \cite[Prop. 3.10. p. 17]{BS88}. Of course, $X_b$ is always non-trivial for non-trivial space $X$, moreover the subspace $X_b$ is an order ideal of $X$. We always have $X_a \subset X_b$, see \cite[Th. 3.11, p. 18]{BS88}, and
the inclusion $\{0\} \neq X_a \subset X_b$ may be proper, cf. \cite[Ex. 3, p. 30]{BS88}. The fact, that this example is based on non-symmetric construction is essential. In the symmetric case we have only two possibilities, more precisely $X_a = \{0\}$ or
$X_a = X_b$. The subspaces $X_a$ and $X_b$ coincide if and only if the characteriztic functions of the sets of finite measure
all have absolutely continuous norms, cf. \cite[Th. 3.13, p. 19]{BS88}, \cite[Prop. 2.2]{Ko05} and Theorem B. 
Next theorem describes the opposite extreme, when $X_a$ is trivial.

\medskip
\noindent
\textbf{Theorem B.} \label{Xapsutawaunki}
Let $E$ be a symmetric space. The following conditions are equivalent:
\begin{enumerate}
\item $E_a = \{0\}$,
\item $E\hookrightarrow L^\infty$,
\item $E_a \neq E_b$,
\item $\varphi_E(0^{+}) > 0$.
\end{enumerate}
In particular, if $I=[0,1]$ then condition (ii) is equivalent to the statement $E = L^\infty$.
\begin{proof}
Obviously, $E\hookrightarrow L^\infty$ is equivalent to $E = L^\infty$ if $I=[0,1]$ because the embedding
$L^\infty \hookrightarrow E$ holds for each symmetric space $E$ on $I=[0,1]$, see \cite[Corollary 6.7, p. 78]{BS88}.
Equivalence of conditions (i), (iii) and (iv) follows immediately from \cite[Th. 5.5, p. 67]{BS88}
and our discussion preceding this theorem. For the implication (ii) $\Rightarrow$ (iv) it is enough
to observe, that
\begin{align*}
\varphi_E(0^{+}) & = \lim\limits_{t\rightarrow 0^{+}} \norm{\chi_{[0,t]}}_{E}
                 \geq A\lim\limits_{t\rightarrow 0^{+}} \norm{\chi_{[0,t]}}_{L^\infty}
                 = A > 0,
\end{align*}
where $A$ is the embedding constant. Finally, suppose $\varphi_E(0^{+}) > 0$ and $E \nsubseteq L^\infty$.
This means, that there is an unbounded element $f \in E$. Then the
 $f^{*}$ is unbounded in zero and $f^{*} \in E$ by the symmetry of $E$. Therefore we can find a sequence $(t_n)\subset I$,
$t_n \rightarrow 0^{+}$ as $n \rightarrow \infty$, with $f^{*}(t_n) > n$. We have
$$ \norm{\chi_{[0,t_n]}}_{E} \leq \norm{\frac{1}{n}f^{*}\chi_{[0,t_n]}}_{E} \leq \frac{1}{n}\norm{f^{*}}_{E} \rightarrow 0,$$
as $n \rightarrow \infty$ contrary to the assumption that $\varphi_E(0^{+}) > 0$.
\end{proof}
\medskip

For $s>0$ the dilation operator $D_s$ is defined, on $L^0 (I)$, by 
\begin{equation*}
D_s f(t) := f(\frac{t}{s}) \chi_I (\frac{t}{s}) = f(\frac{t}{s}) \chi_{[0,\min\{1,s\} )} (t),
\end{equation*}
for $t\in I$, is bounded in any symmetric space $E$ on $I$ and $\left\|
D_s\right\|_{E\rightarrow E}\le \text{max}\{1,s\}$ (see \cite[p. 148]{BS88}%
). The lower and upper Boyd indices of $E$ are defined by
\begin{equation*}
p(E) := \lim \limits_{s\rightarrow \infty} \frac{\text{ln}s}{\text{ln} \left\|
D_s\right\|_{E\rightarrow E}},
\end{equation*}
\begin{equation*}
q(E) := \lim \limits_{s\rightarrow 0^{+}} \frac{\text{ln}s}{\text{ln} \left\|
D_s\right\|_{E\rightarrow E}}.
\end{equation*}
They satisfy the inequalities $1\le p(E)\le q(E) \le \infty$.
For more details see \cite{Bo69}, \cite{Bo71} and \cite{Ma85}.

Next result will be used in Section 3.

\medskip
\noindent
\textbf{Theorem C.} 
(\cite[Prop. 2.b.3, p. 132]{LT79}) Let $E$ be a symmetric function space. Then for every $1\leq p < p(E)$ and $q(E)<q\leq\infty$ we have
$$L^p\cap L^{q} \hookrightarrow E \hookrightarrow L^p+L^{q}.$$
In particular, if we take $q = \infty$ then for every $1\leq p < p(E)$ we get
$$L^p\cap L^{\infty} \hookrightarrow E\hookrightarrow L^p+L^{\infty}.$$ $\hfill\ensuremath{\square}$
\medskip

The continuous Ces\`aro operator $C : L^{0}(I)\rightarrow L^{0}(I)$ is defined
by 
\begin{equation*}
Cf(x) := \frac{1}{x}\int_{0}^{x}f(t)\text{d}t,
\end{equation*}%
for $0<x\in I$. For a Banach ideal space $X$ on $I$ we define an abstract Ces\`aro space $CX=CX(I)$ by 
\begin{equation*}
CX := \{f\in L^{0}(I):C|f|\in X\},
\end{equation*}%
with the norm $\left\Vert f\right\Vert _{CX}=\left\Vert C|f|\right\Vert _{X}$
(see \cite{CR16}, \cite{DS07}, \cite{LM15a}, \cite{LM15b}, \cite{LM15p}).
Let us note that for non-symmetric space $X$ the space $CX$ need not have a weak unit even if $X$ has it
(see \cite[Example 2]{LM15a}), so in general $\supp(CX) \subset \supp(X)$. Of course, if we assume that $C : X \rightarrow X$
then $\supp(CX) = \supp(X)$, cf. \cite[Remark 1]{LM15a}. Moreover, if $CX$ is non-trivial and $X$ is symmetric space on $I$ then
$\supp(CX) = I$, see Lemma \ref{niepustoscCX}.

Let us mention the important result about boundedness of the Ces\`aro operator.
\medskip
\newline
\textbf{Theorem D.} \cite[\text{p.} 127]{KMP07} For any symmetric space $E$
on $I$ the operator $C : E\rightarrow E$ is bounded if and only if the lower
Boyd index satisfies $p(E)>1$. $\hfill\ensuremath{\square}$
\medskip

Note that if $(X,\left\| \cdot\right\|_{X})$ is a Banach ideal space then the assumption $C
: X \rightarrow X$ is equivalent to the statement $C$ is bounded. In fact, if $C : X \rightarrow X$%
, then $X\hookrightarrow CX$. This means that there is $M>0$ with $\left\|
f\right\|_{CX} = \left\| C|f| \right\|_{X} \le M\left\| f \right\|_{X}$ for all $%
f\in X$, i.e. $C$ is bounded. On the other hand, if $C$ is bounded then for all $f \in X$ we have
$\norm{f}_{CX} = \norm{C|f|}_{X} \leq \|C\| \norm{|f|}_{X} = \|C\| \norm{f}_{X},$ which
means that $X \subset CX$ and $C : X \rightarrow X$.
However, it may happen that $%
X\not\hookrightarrow CX$ (see \cite[Proposition 2.1]{DS07}, \cite[Example 14]{KK17}). Moreover, $C :
CX \rightarrow X$ is always bounded (from the definition of $CX$) and $CX$
is so-called optimal domain of $C$ for $X$ (cf. \cite{DS07} and \cite{LM15b}%
).

The immediate consequence of Theorem D and the above discussion about
boundedness of the Ces\`aro operator is a next

\medskip
\noindent
\textbf{Theorem E.}
\label{coro: inkluzja} For any symmetric space $E$ the embedding $E \hookrightarrow CE$
holds if and only if $p(E) > 1$. In particular, if $p(E) > 1$ then the space $CE$ is non-trivial.$\hfill\ensuremath{\square}$

\medskip

It is worth to notice the following useful observation.

\begin{fact}\label{Cfgwiazdka}
(cf. \cite[Prop. 3.2, p. 52]{BS88}) For every function $f : I \rightarrow \mathbb{R}$ we have the following inequalities
$$Cf \leq \abs{Cf} \leq C\abs{f} \leq C(f^*) := f^{**}.$$
Moreover, $(Cf)^{*} \leq C(f^{*})$.
$\hfill\ensuremath{\square}$
\end{fact}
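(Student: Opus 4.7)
The statement is essentially a chain of four pointwise inequalities plus a rearrangement bound, so my plan is to handle each link of the chain in turn and then derive the "moreover" clause from what we already have.

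First I would dispatch the two easy steps. The inequality $Cf \leq |Cf|$ is immediate from $a \leq |a|$ applied pointwise at each $x>0$. The inequality $|Cf| \leq C|f|$ is the triangle inequality for integrals: for every $x>0$,
\[
|Cf(x)| = \Bigl|\tfrac{1}{x}\int_{0}^{x} f(t)\,\mathrm{d}t\Bigr| \leq \tfrac{1}{x}\int_{0}^{x} |f(t)|\,\mathrm{d}t = C|f|(x).
\]
The equality $C(f^{*}) = f^{**}$ is simply the definition of the maximal function $f^{**}$.

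The only step that requires a substantive tool is the middle one, $C|f| \leq C(f^{*})$. This is the classical Hardy--Littlewood inequality
\[
\int_{0}^{x} |f(t)|\,\mathrm{d}t \leq \int_{0}^{x} f^{*}(t)\,\mathrm{d}t \qquad (x > 0),
\]
which is exactly the content of \cite[Prop.~3.2, p.~52]{BS88} already cited in the statement; dividing both sides by $x$ gives the claim. Concatenating the four links yields the displayed chain.

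For the "moreover" part, I would not rework the argument from scratch but exploit what has just been proved. The chain shows $|Cf(x)| \leq f^{**}(x)$ for every $x>0$, and $f^{**}$ is non-negative and non-increasing on $I$, so $f^{**}$ coincides with its own decreasing rearrangement. Since $|Cf| \leq f^{**}$ gives $d_{Cf}(\lambda) \leq d_{f^{**}}(\lambda)$ for every $\lambda \geq 0$, passing to the generalized inverse yields $(Cf)^{*}(t) \leq (f^{**})^{*}(t) = f^{**}(t) = C(f^{*})(t)$ for all $t \in I$. No delicate step is expected to cause trouble; the whole argument is routine once the Hardy--Littlewood inequality is invoked, which is the only non-trivial ingredient.
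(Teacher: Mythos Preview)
Your argument is correct in every step. Note that the paper does not actually supply a proof of this fact: it simply records the inequalities with a reference to \cite[Prop.~3.2, p.~52]{BS88} and closes with a box, so your write-up is more detailed than what appears in the paper and there is nothing further to compare.
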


Let us note some basic fact about the non-triviality of the space $CX$ when
$X$ is symmetric function space.

\begin{lemma}\label{niepustoscCX}
Let $X$ be symmetric space on $I$. If $I=[0,1]$ then $CX\neq\{0\}$.
If $I=[0,\infty)$ then the following conditions are equivalent
\begin{enumerate}
\item $CX\neq\{0\}$,
\item $\frac{1}{x}\chi_{(t_0,\infty)}\in X$ for some $t_0 > 0$,
\item $\frac{1}{x}\chi_{(t,\infty)}\in X$ for all $t > 0$.
\end{enumerate}
\end{lemma}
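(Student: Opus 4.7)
The plan is to split along the cases $I=[0,1]$ and $I=[0,\infty)$. For $I=[0,1]$ the statement is essentially a one-line observation: every symmetric space on $[0,1]$ contains $L^\infty$ (see \cite[Corollary 6.7, p. 78]{BS88}), so $\chi_{[0,1]}\in X$; since $C\chi_{[0,1]}(x)=1=\chi_{[0,1]}(x)$, we also have $\chi_{[0,1]}\in CX$, which is therefore non-trivial.

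For $I=[0,\infty)$ I would prove the cyclic chain (iii)$\Rightarrow$(ii)$\Rightarrow$(i)$\Rightarrow$(iii). The implication (iii)$\Rightarrow$(ii) is trivial. For (ii)$\Rightarrow$(i) I would simply take $f=\chi_{[0,t_0]}$ and compute
\begin{equation*}
Cf(x)=\chi_{[0,t_0]}(x)+\frac{t_0}{x}\chi_{(t_0,\infty)}(x).
\end{equation*}
The first summand belongs to $X$ because non-triviality of the symmetric space $X$ forces $\supp(X)=I$, so $\chi_{[0,t_0]}\in X$; the second summand belongs to $X$ by hypothesis (ii) and the ideal property. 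Hence $f\in CX$, so $CX\neq\{0\}$.

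The main work lies in (i)$\Rightarrow$(iii). Pick $0\neq f\in CX$; without loss of generality $f\geq 0$. Since $f$ is not a.e.\ zero, I can choose $a>0$ so small that $c:=\int_0^a f(t)\,dt>0$. Then for every $x\geq a$,
\begin{equation*}
Cf(x)=\frac{1}{x}\int_0^x f(t)\,dt\geq \frac{c}{x},
\end{equation*}
so $\frac{c}{x}\chi_{[a,\infty)}\leq Cf\in X$, and the ideal property of $X$ gives $\frac{1}{x}\chi_{[a,\infty)}\in X$. To pass from $a$ to an arbitrary $t>0$ I split into cases: if $t\geq a$, then $\frac{1}{x}\chi_{(t,\infty)}\leq \frac{1}{x}\chi_{[a,\infty)}$ and we are done by the ideal property; if $0<t<a$, then $\frac{1}{x}\chi_{(t,a]}\leq\frac{1}{t}\chi_{(t,a]}\in X$ (again using $\supp(X)=I$), and adding $\frac{1}{x}\chi_{(a,\infty)}\in X$ yields $\frac{1}{x}\chi_{(t,\infty)}\in X$.

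The only non-routine step is finding the positive lower bound $c/x$ for $Cf$ on $[a,\infty)$; everything else is a direct application of the ideal property and of the non-triviality of $X$ as a symmetric space. No deeper structural results about $X$ (such as the Boyd indices or the behaviour of $C$ on $X$) are needed.
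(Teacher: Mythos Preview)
Your proof is correct, with one harmless slip: in (i)$\Rightarrow$(iii) you write ``choose $a>0$ so small that $c:=\int_0^a f>0$,'' but of course $a$ must be chosen large enough (or simply: some $a>0$ exists with $\int_0^a f>0$, since $f\ge 0$ is not a.e.\ zero and $Cf\in X$ forces $f\in L^1_{\mathrm{loc}}$). The remainder of the argument never uses smallness of $a$, so nothing breaks.

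The paper's proof is shorter but less self-contained: for $I=[0,1]$ it argues via the inclusion $Ces_\infty=CL^\infty\hookrightarrow CX$ (the same idea as yours, phrased at the level of spaces rather than via the single witness $\chi_{[0,1]}$), and for $I=[0,\infty)$ it simply \emph{cites} \cite[Theorem~1(a)]{LM15a} for the equivalence (i)$\Leftrightarrow$(ii), then proves only (ii)$\Rightarrow$(iii) by the same splitting $\frac{1}{x}\chi_{(t,\infty)}=\frac{1}{x}\chi_{(t,t_0]}+\frac{1}{x}\chi_{(t_0,\infty)}$ that you use in your $t<a$ case. Your direct argument for (i)$\Rightarrow$(iii), via the pointwise lower bound $Cf(x)\ge c/x$ on $[a,\infty)$, is precisely the content hidden behind that citation, so your version supplies what the paper outsources. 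The trade-off is brevity versus self-containment; the underlying ideas coincide.
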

\begin{proof}
We give an easy proof for the convenience of the reader. Assume that $X$ is symmetric space on $I$. 
Because $L^\infty[0,1] \hookrightarrow X[0,1]$, so we have that
$$\{0\} \neq CL^\infty[0,1] := Ces_\infty[0,1] \hookrightarrow CX[0,1],$$
see \cite[Corollary 6.7, p. 78]{BS88} and \cite[Remark]{KK16}. This means that $CX[0,1]$ is always
non-trivial. Equivalence of conditions (i) and (ii) follows from \cite[Theorem 1 (a)]{LM15a}.
Of course, (iii) implies (ii). Therefore, we will only show that (ii) implies (iii). In fact,
take $0< t < t_0$ (if $t_0 < t$ there is nothing to prove). Then
$$\frac{1}{x}\chi_{(t,\infty)} = \frac{1}{x}\chi_{(t, t_0]} + \frac{1}{x}\chi_{(t_0,\infty)} \in X,$$
because $\frac{1}{x}\chi_{(t_0,\infty)} \in X$ from the assumption and $\frac{1}{x}\chi_{(t, t_0]} \in X$
since $X$ is symmetric.
\end{proof}

\section{On the OC-points in Ces\`aro function spaces CX}

\begin{theorem} \label{glownywynik}
Let $X$ be symmetric space such that Ces\`aro operator is bounded on $X$.
Then $X\in(OC)$ if and only if $CX\in(OC)$.
\end{theorem}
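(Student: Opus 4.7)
The plan is to prove both implications directly from the sequence characterization of order continuity recorded in the preliminaries: $f \in X_a$ if and only if $\|f\chi_{A_n}\|_X \to 0$ for every $A_n \downarrow \emptyset$.

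For the direction $X \in (OC) \Rightarrow CX \in (OC)$, I would fix $f \in CX$ and $A_n \downarrow \emptyset$ and reduce matters to the order continuity of $C|f| \in X$. The crucial observation is the pointwise sandwich $0 \leq C(|f|\chi_{A_n}) \leq C|f|$, together with the fact that $\int_0^x |f|\,dt < \infty$ for a.e.\ $x$, so Lebesgue's dominated convergence applied to each averaging integral yields $C(|f|\chi_{A_n})(x) \to 0$ pointwise a.e. The order continuity of $X$, applied to $C|f|$, then forces $\|f\chi_{A_n}\|_{CX} = \|C(|f|\chi_{A_n})\|_X \to 0$. This half does not even use the boundedness of $C$ on $X$.

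For the converse I argue the contrapositive. Boundedness of $C$ on $X$ gives $X \hookrightarrow CX$ (via Theorem~D and Theorem~E), so every $f \in X$ automatically belongs to $CX$. Assuming $X \notin (OC)$, I pick $f \in X \setminus X_a$; by Theorem~A(i) I may assume $f = f^*$, and by Theorem~A(ii) at least one of $\|f\chi_{[0,1/n)}\|_X \not\to 0$ or $\|f\chi_{[n,\infty)}\|_X \not\to 0$ holds (the latter only if $I = [0,\infty)$). The task is to convert each failure into a witnessing sequence $A_n \downarrow \emptyset$ inside $CX$. In the first case I take $A_n = [0,1/n) \cap I$: since $f$ is non-increasing, $C(f\chi_{A_n})(x) \geq Cf(x) \geq f(x)$ on $[0,1/n)$, giving $\|f\chi_{A_n}\|_{CX} \geq \|f\chi_{[0,1/n)}\|_X \not\to 0$. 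In the second case I take $A_n = [n,\infty)$ and estimate, for $x \geq 2n$, $C(f\chi_{[n,\infty)})(x) = \tfrac{1}{x}\int_n^x f(t)\,dt \geq \tfrac{x-n}{x}f(x) \geq \tfrac{1}{2} f(x)$, whence $\|f\chi_{A_n}\|_{CX} \geq \tfrac{1}{2}\|f\chi_{[2n,\infty)}\|_X \not\to 0$. Either way $f \in CX \setminus (CX)_a$ and $CX \notin (OC)$.

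The main obstacle is the converse: one must choose the sets $A_n$ so that Cesàro averaging does not smear out the mass of $f$ that prevents it from being order continuous. Passing to the decreasing rearrangement is essential, since both pointwise bounds $Cf \geq f$ (on $[0, 1/n)$) and $C(f\chi_{[n,\infty)})(x) \geq \tfrac{1}{2} f(x)$ (for $x \geq 2n$) rely on $f$ being non-increasing. Matching the location of the non-OC behavior of $f^*$ (near zero versus at infinity) with the corresponding choice of $A_n$ is the only delicate point of the argument.
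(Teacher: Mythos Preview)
Your argument is correct and takes a genuinely different, more elementary route than the paper. The paper first builds up a complete description of $(CX)_a$ (Theorem~\ref{CXa}, which rests on Lemmas~\ref{CXaneq0}--\ref{Cxa=CXb2}) and then deduces both directions of Theorem~\ref{glownywynik} from it together with Lemma~\ref{fgwiazdka}; in particular the converse there goes through the identity $(CX)_a=C(X_a)$ and the pointwise inequality $C(f^*)\geq f^*$. You bypass all of this machinery: the forward direction is the direct dominated-convergence argument (essentially \cite[Lemma~1(a)]{LM15p}, as noted in Remark~\ref{dyskusja}), and for the converse you work directly with the two-sided criterion of Theorem~A(ii), exploiting monotonicity of $f^*$ to get $C(f^*\chi_{[0,1/n)})\geq f^*\chi_{[0,1/n)}$ near zero and $C(f^*\chi_{[n,\infty)})\geq \tfrac12 f^*\chi_{[2n,\infty)}$ at infinity. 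One small point worth making explicit in the second case: the sequence $\|f^*\chi_{[n,\infty)}\|_X$ is non-increasing in $n$, so its failure to tend to $0$ implies the same for the subsequence $\|f^*\chi_{[2n,\infty)}\|_X$. Your approach is shorter and self-contained for this statement; the paper's approach has the advantage that the structural results it develops along the way (the precise form of $(CX)_a$ in all cases) are of independent interest and drive the applications in Section~4.
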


The proof of the above theorem will be given at the end of this section as a consequence of  characterization of
subspace $(CX)_a$. We will begin with some observations and examples.

\begin{remark}
The implication: $X$ is order continuous then $CX$ is also, follows easily (in fact from the definition) 
without any assumption, see \cite[Lemma 1 (a)]{LM15p}. Moreover, in the case of $I=[0,1]$ Theorem \ref{glownywynik} has been proved already, see \cite[Proposition 2]{KK16}.
\end{remark}

Leaving the assumption of symmetry Theorem \ref{glownywynik} ceases to be true, as the following example shows.

\begin{example} (see \cite[Example 1]{LM15p}) If $X = L^2[0,\frac{1}{4}] \oplus L^\infty[\frac{1}{4},\frac{1}{2}] \oplus L^2[\frac{1}{2},1]$, then of course $X\notin (OC)$ but $CX = Ces_2[0,1] \in (OC)$, see Proposition \ref{OCLp}.
\end{example}

\begin{remark} \label{dyskusja}
Curbera and Ricker showed in \cite[Prop. 3.1 (c)]{CR16}, using the methods of vector measures and integral representation, that
(adapting into our notation) $C(X_a) = CX_a$ whenever $X$ is a symmetric space on $[0,1]$ and $X \neq L^\infty$. The last condition can be expressed equivalently in several ways, cf. Theorem B. Similar results are also known in the case of $I=[0,\infty)$ but only
for a certain class of the Orlicz spaces. Namely, 
suppose that $\Phi$ is an Orlicz function with $\Phi < \infty$, i.e. $b_\Phi = \infty$ and $L^\Phi$ is
the Orlicz space generated by function $\Phi$ (cf. the definitions in the
subsection \ref{Cesaro-Orlicz subsection} and references therein). From \cite[Theorem 5 (i)]{KK16} we have the following
equalities
\begin{align*}
C(L_a^\Phi)
& = \{f : C\abs{f}\in L^\varphi_{a} \}\\
& = \{f : I_\Phi(\lambda C\abs{f}) < \infty \quad \text{for all } \lambda(f) :=\lambda > 0 \}\\
& = \{f : \rho_\Phi(\lambda f) < \infty \quad \text{for all } \lambda(f) :=\lambda > 0 \} = C_\Phi,
\end{align*}
where $C_\Phi$ is the space of all order continuous elements of space $CL^{\Phi}:=Ces_\Phi$, so
$$(CL^{\Phi})_a = (Ces_\Phi)_a = C_\Phi = C(L_a^\Phi).$$
\end{remark}

\begin{example}
The above observation leads naturally to the question - is it true, that $C(X_a)=(CX)_a$ for any symmetric space $X$?
Moment of thought show us that in general that is not the case. Taking $L^\infty$ we observe that 
$$C(L^{\infty}_a) = C \{0\} = \{0\} \varsubsetneq (Ces_{\infty})_a := (CL^{\infty})_a\neq \{0\},$$ 
since elementary computation shows that, for example $\chi_{[\frac{1}{2},1)}\in (CL^{\infty})_a$.
\end{example}

However, in Theorem \ref{CXa} we will give an answer when this representation is possible and describe the space $(CX)_a$
in the other cases. We start with the following lemma.



\begin{lemma}\label{CXaneq0}
Let $X$ be symmetric space such that  $CX\neq\{0\}$ and let $A\subset I$. If $\essinf(A)>0$ and $m(A)<\infty$ then $\chi_A\in(CX)_a$.
\end{lemma}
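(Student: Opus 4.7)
The strategy is to invoke the alternative characterization of order continuity for Banach ideal spaces recalled in Section~2: an element $f$ of an ideal space $Y$ belongs to $Y_a$ if and only if $\norm{f\chi_{B_n}}_{Y} \to 0$ for every sequence $B_n \downarrow \emptyset$ of measurable sets. Applied to $Y = CX$ and $f = \chi_A$, the task reduces to showing that $\norm{C\chi_{A \cap B_n}}_{X} \to 0$ for every such sequence $(B_n)$.

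Set $a := \essinf(A) > 0$, so $A$ is essentially contained in $[a,\infty)$, and put $c_n := m(A \cap B_n)$. Since $m(A) < \infty$ and the sets $A \cap B_n$ decrease to a null set, $c_n \to 0$. The crucial pointwise estimate is
\[
C\chi_{A \cap B_n}(x) = \frac{m\bigl((A \cap B_n) \cap [0,x]\bigr)}{x} \leq \frac{c_n}{x}\,\chi_{[a,\infty)}(x),
\]
the indicator $\chi_{[a,\infty)}$ appearing because for $x < a$ the numerator already vanishes.

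It remains to verify that the dominating function $h(x) := \tfrac{1}{x}\chi_{[a,\infty)}(x)$ belongs to $X$. For $I = [0,1]$ this is immediate: $h$ is bounded by $1/a$ on a set of finite measure and $L^\infty[0,1] \hookrightarrow X$ by \cite[Corollary 6.7, p. 78]{BS88}. For $I = [0,\infty)$ the assumption $CX \neq \{0\}$ combined with Lemma \ref{niepustoscCX} gives precisely $\tfrac{1}{x}\chi_{(t,\infty)} \in X$ for every $t > 0$, which covers this case as well.

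Combining these ingredients, the ideal property of $X$ yields
\[
\norm{\chi_A \chi_{B_n}}_{CX} = \norm{C\chi_{A \cap B_n}}_{X} \leq c_n \norm{h}_{X} \longrightarrow 0,
\]
which finishes the argument. No step is really deep; the only subtle point is the case distinction between $I=[0,1]$ and $I=[0,\infty)$, where in the latter one genuinely needs the non-triviality assumption on $CX$ (via Lemma \ref{niepustoscCX}) to supply an $X$-member dominating the Cesàro averages of $\chi_A$.
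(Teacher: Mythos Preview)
Your proof is correct and in fact slightly cleaner than the paper's. Both arguments use the $A_n \downarrow \emptyset$ characterization of order continuity and ultimately rely on Lemma~\ref{niepustoscCX} to produce a fixed $X$-function dominating the Ces\`aro averages. The paper, however, proceeds in two stages: it first treats the interval case $A=(a,b)$ via the splitting
\[
C\chi_{B_n}(x)\leq \frac{m(B_n)}{b}\chi_{(a,b)}(x)+\frac{m(B_n)}{x}\chi_{(b,\infty)}(x),
\]
and then reduces the general set $A$ to this case by the rearrangement-type inequality $C\chi_{B_n}\leq C\chi_{[a,\,a+m(B_n)]}$. Your single estimate $C\chi_{A\cap B_n}(x)\leq \frac{c_n}{x}\chi_{[a,\infty)}(x)$ handles the general $A$ directly and bypasses both the interval reduction and the rearrangement step, at the small cost of an explicit case split on $I=[0,1]$ versus $I=[0,\infty)$ (which the paper absorbs into its appeal to Lemma~\ref{niepustoscCX}). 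The trade-off is negligible; your argument is the more economical of the two.
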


\begin{proof}
At the beginning we will show that $\chi_{(a,b)}\in (CX)_a$, when $0<a<b<\infty$. Let  $(A_n)\subset I$ be sequence of measurable sets with $A_n\downarrow\emptyset$. Put $B_n=A_n\cap (a,b)$. Simply $m(B_n)\rightarrow 0$ when $n\rightarrow\infty$. We have
\begin{align*}
C\chi_{B_n}(x)&=\frac{1}{x}\int_0^x\chi_{B_n}(t)\text{dt}\\
&=\frac{\chi_{(a,b)}(x)}{x}\int_{(a,\min\{x,b\})\cap A_n}\text{dt}+\frac{m(B_n)}{x}\chi_{(b,\infty)}(x)\\
&\leq\frac{m(B_n)}{b}\chi_{(a,b)}(x)+ \frac{m(B_n)}{x}\chi_{(b,\infty)}(x).
\end{align*}
From Lemma \ref{niepustoscCX}, assumption $CX\neq\{0\}$ implies $\norm{\frac{1}{x}\chi_{(b,\infty)}}_X<\infty$. Therefore
\begin{align*}
\norm{\chi_{ (a,b)}\chi_{A_n}}_{CX}&=\norm{\chi_{B_n}}_{CX}\\
&=\norm{C\chi_{B_n}}_X
\leq\norm{\frac{m(B_n)}{b}\chi_{(a,b)}}_X+ \norm{\frac{m(B_n)}{x}\chi_{(b,\infty)}}_X\\
&=m(B_n)(\frac{1}{b}\norm{\chi_{(a,b)}}_X+ \norm{\frac{1}{x}\chi_{(b,\infty)}}_X)\rightarrow 0 \quad \text{as } n\rightarrow\infty. 
\end{align*}
Since $A_n$ was chosen arbitrary, we get  $\chi_{(a,b)}\in  (CX)_a$.

Let $A\subset I$ be a subset with $\essinf(A)=a>0$ and $m(A)<\infty$. Take a sequence of measurable sets $(A_n) \subset I$ with $A_n\downarrow\emptyset$. Denote $B_n=A_n\cap A$. Then, like above, $m(B_n)\rightarrow 0$ when $n\rightarrow\infty$. Note that
$$C(\chi_{B_n})\leq C(\chi_{[a,m(B_n)+a]}).$$ Since $\chi_{[a,m(B_n)+a]}\leq \chi_{[a,m(A)+a]}$ and $\chi_{[a,m(B_n)+a]}(t)\downarrow 0$ as $n \rightarrow \infty$ for a.e. $t\in I$ thus, from a previous case,  we obtain 
\begin{align*}
\norm{\chi_{A}\chi_{A_n}}_{CX}&=\norm{C(\chi_{B_n})}_{X}\\
&\leq\norm{C\chi_{[a,m(B_n)+a]}}_X\\
&=\norm{\chi_{[a,m(B_n)+a]}}_{CX}\rightarrow 0 \quad \text{as } n\rightarrow\infty. 
\end{align*}
Thus $\chi_A\in(CX)_a$.
\end{proof}

\begin{corollary}\label{CXaneq0c}
Let $X$ be symmetric space such that $CX\neq\{0\}$. If $f\in(CX)_b$ and $\essinf(A)>0$, where $A\subset I$ then $f\chi_A\in(CX)_a$.
\end{corollary}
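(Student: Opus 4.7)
The plan is to combine Lemma \ref{CXaneq0} (which handles characteristic functions of sets separated from zero) with a density argument coming from the definition of $(CX)_b$, plus the standard fact that $(CX)_a$ is a closed ideal. Recall that $(CX)_b$ is the closure in $CX$ of the bounded functions supported on sets of finite measure.

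First I would approximate. Fix $f\in(CX)_b$ and pick a sequence of functions $(f_n)$ with $f_n$ bounded, $\supp(f_n)$ of finite measure, and $\norm{f-f_n}_{CX}\to 0$. The key preliminary observation is that multiplication by $\chi_A$ is a contraction on $CX$: for any $g\in CX$,
\begin{equation*}
C(\abs{g}\chi_A)(x) = \frac{1}{x}\int_0^x \abs{g(t)}\chi_A(t)\,dt \leq \frac{1}{x}\int_0^x \abs{g(t)}\,dt = C\abs{g}(x),
\end{equation*}
hence $\norm{g\chi_A}_{CX}\leq\norm{g}_{CX}$. Applying this to $g=f-f_n$ gives $\norm{f\chi_A - f_n\chi_A}_{CX}\to 0$, so it suffices to verify $f_n\chi_A\in(CX)_a$ for each $n$, and then invoke that $(CX)_a$ is closed in $CX$ (cf.~\cite[Th.~3.8, p.~16]{BS88}).

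Next I would handle each approximant. Write $|f_n|\leq M_n\chi_{S_n}$ with $S_n=\supp(f_n)$ of finite measure. Set $B_n := S_n\cap A$; then $m(B_n)<\infty$ and $\essinf(B_n)\geq\essinf(A)>0$. Thus $|f_n\chi_A|\leq M_n\chi_{B_n}$. By Lemma \ref{CXaneq0}, $\chi_{B_n}\in(CX)_a$, and since $(CX)_a$ is an order ideal of $CX$ (if $|h|\leq|\psi|$ and $\psi\in(CX)_a$, the defining condition immediately transfers to $h$), we conclude $f_n\chi_A\in(CX)_a$.

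Combining the two steps, $f\chi_A$ is the $CX$-limit of the sequence $(f_n\chi_A)\subset(CX)_a$, and by the closedness of $(CX)_a$ we obtain $f\chi_A\in(CX)_a$, which is the claim. There is no real obstacle here; the only point that requires a moment's thought is the continuity of multiplication by $\chi_A$ on $CX$, which is immediate from the monotonicity of the Ces\`aro averaging operator on nonnegative functions.
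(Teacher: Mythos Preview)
Your proof is correct and follows essentially the same route as the paper: approximate $f$ in $(CX)_b$ by functions of finite support, apply Lemma~\ref{CXaneq0} to get each approximant (restricted to $A$) into $(CX)_a$, and pass to the limit using that $(CX)_a$ is closed. The only cosmetic differences are that the paper approximates by simple functions rather than bounded functions of finite-measure support, and it invokes Lemma~\ref{CXaneq0} directly on the pieces of a simple function rather than through the ideal property; your explicit verification that multiplication by $\chi_A$ is a contraction on $CX$ is the content of the paper's ``Of course, $f_n\chi_A\to f\chi_A$''.
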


\begin{proof}
Let $f\in (CX)_b$, $(f_n)$ be sequence of simple functions with $\lim\limits_{n\rightarrow\infty}f_n = f$ and $A\subset I$ be
like in the assumptions above. Of course, $f_n\chi_{A} \rightarrow f\chi_{A}$. Moreover, $m(\supp(f_n)\cap A)<\infty$ thus  $f_n\chi_{A} \in (CX)_a$ from Lemma \ref{CXaneq0}. Since $C : CX \rightarrow X$ is always bounded and $(CX)_a$ is closed hence $f\chi_{A} \in (CX)_a$.
\end{proof}

We will use the notation
$\Delta_0 = \Delta_0(E) :=  \{f \in E : \lim\limits_{t\rightarrow 0} Cf(t) = 0 \},$
where $E$ is a Banach ideal space.

\begin{lemma}\label{delta0subsetCXa}
Let $X$ be symmetric space. Then $(CX)_b\cap\Delta_0\subset (CX)_a.$ 
\end{lemma}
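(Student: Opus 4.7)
The plan is to exhibit $f$ as the $CX$-limit of a family of elements of $(CX)_a$ and then use that $(CX)_a$ is closed in $CX$. The natural candidate is $f_\epsilon := f\chi_{[\epsilon,\infty)}$: since $f \in (CX)_b$ and $\essinf([\epsilon,\infty)) = \epsilon > 0$, Corollary \ref{CXaneq0c} gives $f_\epsilon \in (CX)_a$ for each $\epsilon > 0$. The whole proof then reduces to showing $\|f\chi_{[0,\epsilon)}\|_{CX} \to 0$ as $\epsilon \to 0^+$. Passing to $|f|$ one may assume $f \geq 0$; the trivial case $CX = \{0\}$ is discarded at once.

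The key computation is the pointwise identity
$$C(f\chi_{[0,\epsilon)})(x) \;=\; Cf(x)\,\chi_{[0,\epsilon)}(x) \;+\; \frac{\epsilon\,Cf(\epsilon)}{x}\,\chi_{[\epsilon,\infty)}(x),$$
obtained by splitting the integral defining $C$ at $x = \epsilon$. I would then bound the $X$-norm of each summand on the right.

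For the first summand, monotonicity of the fundamental function yields
$$\|Cf\cdot \chi_{[0,\epsilon)}\|_X \;\leq\; \Bigl(\sup_{t \in (0,\epsilon)} Cf(t)\Bigr)\,\varphi_X(\epsilon) \;\leq\; \Bigl(\sup_{t \in (0,\epsilon)} Cf(t)\Bigr)\,\varphi_X(1),$$
and the supremum tends to $0$ directly by the $\Delta_0$ hypothesis. For the second summand, extracting the constant gives $\|\tfrac{\epsilon\,Cf(\epsilon)}{x}\chi_{[\epsilon,\infty)}\|_X = Cf(\epsilon)\,\|\tfrac{\epsilon}{x}\chi_{[\epsilon,\infty)}\|_X$, and the remaining norm is bounded uniformly for $\epsilon \in (0,1]$: on $I = [0,1]$ directly from $\tfrac{\epsilon}{x}\chi_{[\epsilon,1]} \leq \chi_{[0,1]}$, and on $I = [0,\infty)$ by recognising $\tfrac{\epsilon}{x}\chi_{[\epsilon,\infty)} = D_\epsilon\bigl(\tfrac{1}{x}\chi_{[1,\infty)}\bigr)$ and invoking $\|D_\epsilon\|_{X\to X} \leq 1$ together with $\tfrac{1}{x}\chi_{[1,\infty)} \in X$, which is guaranteed by Lemma \ref{niepustoscCX} because $CX \neq \{0\}$. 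Multiplying by $Cf(\epsilon) \to 0$ drives the second piece to zero as well.

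Combining these two estimates, $\|f\chi_{[0,\epsilon)}\|_{CX} \to 0$, so $f$ is the $CX$-limit of the family $\{f_\epsilon\}_{\epsilon > 0} \subset (CX)_a$ and therefore lies in $(CX)_a$ by closedness. The delicate point is the second summand: the $\Delta_0$ hypothesis provides only qualitative decay of $Cf(\epsilon)$, with no quantitative rate, so one genuinely needs $\|\tfrac{\epsilon}{x}\chi_{[\epsilon,\infty)}\|_X$ to remain bounded as $\epsilon \to 0^+$. On $I = [0,\infty)$ this is precisely where the non-triviality of $CX$ (via Lemma \ref{niepustoscCX}) and the standard dilation estimate come into play in an essential way.
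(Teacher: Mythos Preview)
Your proof is correct and follows essentially the same route as the paper: split $f$ at a small threshold, invoke Corollary~\ref{CXaneq0c} for the piece away from zero, and control the piece near zero using the $\Delta_0$ hypothesis together with $\tfrac{1}{x}\chi_{[1,\infty)}\in X$ from Lemma~\ref{niepustoscCX}. The only cosmetic differences are that the paper verifies the OC-point definition directly (against sequences $A_n\downarrow\emptyset$) rather than appealing to closedness of $(CX)_a$, and obtains the uniform tail bound via the pointwise estimate $C(f\chi_{[0,\delta)})\leq\epsilon\bigl(\chi_{[0,1)}+\tfrac{1}{t}\chi_{[1,\infty)}\bigr)$ rather than through the dilation operator.
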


\begin{proof}
If $CX=\{0\}$ there is nothing to prove.
Assume that $CX\neq\{0\}$ and take $0\leq f\in (CX)_b\cap\Delta_0$. Let $(A_n) \subset I$ be arbitrary sequence of measurable subsets with $A_n\downarrow\emptyset$. Take $\epsilon > 0$.
Since $Cf(0^+) = 0$ there exist $\delta>0$ such that $\sup_{t\in[0,\delta)}Cf(t)<\epsilon$. 
From Corollary \ref{CXaneq0c} we get $f\chi_{[\delta,\infty)}\in(CX)_a$. Therefore there exist $n_{\delta}\in\mathbb{N}$ with
$$\norm{f\chi_{[\delta,\infty)}\chi_{A_n}}_{CX}\leq\epsilon,$$
for every $n\geq n_{\delta}$.
Without loss of generality suppose that $\delta \leq 1$. Then 
$$C(f\chi_{A_n\cap[0,\delta)}) \leq \sup\{Cf(s) : s\in[0,\delta)\}(\chi_{[0,1)}+\frac{1}{t}\chi_{[1,\infty)})
\leq \epsilon(\chi_{[0,1)}+\frac{1}{t}\chi_{[1,\infty)}),$$
for $n\in\mathbb{N}$.
Moreover, $\norm{\frac{1}{t}\chi_{[1,\infty)}}_{X}<\infty$ in view of our assumption $CX\neq\{0\}$ and Lemma \ref{niepustoscCX}. We have
\begin{align*}
\norm{f\chi_{A_n}}_{CX}&\leq\norm{f\chi_{[\delta,\infty)\chi_{A_n}}}_{CX}+\norm{f\chi_{[0,\delta)\chi_{A_n}}}_{CX}\\
&\leq\epsilon+\epsilon\norm{\chi_{[0,1)}+\frac{1}{t}\chi_{[1,\infty)}}_{X}\\
&\leq\epsilon(1+\varphi_X(1)+\norm{\frac{1}{t}\chi_{[1,\infty)}}_{X}),
\end{align*}
for $n\geq n_{\delta}$.
Since $\epsilon > 0$ was arbitrary, we conclude that
$$\norm{f\chi_{A_n}}_{CX}\rightarrow 0 \quad \text{as } n\rightarrow\infty,$$
which ends the proof.
\end{proof}

\begin{lemma}\label{CXasubsetCXa}
Let $X$ be Banach ideal function space, then $$C(X_a)\subset (CX)_a.$$
\end{lemma}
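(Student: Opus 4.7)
The plan is to translate the problem from $CX$ to $X$ via the defining identity $\norm{f}_{CX} = \norm{C\abs{f}}_{X}$ and then apply the order-continuity of a single element of $X$. Given $f \in C(X_a)$, set $g := C\abs{f} \in X_a$. To show $f \in (CX)_a$, take an arbitrary sequence $(A_n)\subset I$ with $A_n \downarrow \emptyset$; by the equivalent characterization of order-continuous points recalled in the preliminaries, it suffices to verify $\norm{f\chi_{A_n}}_{CX} \to 0$.

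Put $g_n := C(\abs{f}\chi_{A_n})$. The key identity is $\norm{f\chi_{A_n}}_{CX} = \norm{g_n}_{X}$, so it is enough to show $\norm{g_n}_{X} \to 0$, and this I intend to deduce from the OC-property of $g$ applied to the sequence $(g_n)$. Two conditions have to be checked. First, $0\le g_n\le g$ pointwise a.e., which is immediate from monotonicity of $C$ and from $\abs{f}\chi_{A_n}\le \abs{f}$. Second, $g_n(x)\to 0$ for almost every $x\in I$; this will follow from the dominated convergence theorem applied pointwise in $x$: since $m(\bigcap_n A_n)=0$ we have $\chi_{A_n}(t)\to 0$ for a.e.\ $t$, while the integrands are dominated by $\abs{f}\chi_{[0,x]}$, which is integrable for a.e.\ $x$ because $x\,g(x)=\int_0^x\abs{f}\,\mathrm{d}t$ is finite for a.e.\ $x$ (as $g\in X\subset L^0$).

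With both conditions in hand, the definition of an OC-point applied to $g\in X_a$ and the sequence $(g_n)$ yields $\norm{g_n}_{X}\to 0$, which by the identity above gives $\norm{f\chi_{A_n}}_{CX}\to 0$ and hence $f\in (CX)_a$. No symmetry of $X$ is needed, consistent with the stated hypothesis that $X$ is merely a Banach ideal space. There is no genuine obstacle in this argument; the only mildly technical point is the pointwise convergence $g_n(x)\to 0$, and it reduces to a one-line application of dominated convergence.
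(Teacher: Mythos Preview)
Your argument is correct and follows essentially the same route as the paper's own proof: reduce to $g=C\abs{f}\in X_a$, use dominated convergence to get $C(\abs{f}\chi_{A_n})\to 0$ a.e., note the pointwise domination by $g$, and invoke the OC-property of $g$. Your version is in fact slightly more careful in justifying why the dominated convergence theorem applies at each fixed $x$ (via $g(x)<\infty$ a.e.), a point the paper leaves implicit.
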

\begin{proof} 
Suppose $0\leq f\in C(X_a)$. Let $(A_n)$ be arbitrary sequence of measurable subsets of $I$ with $A_n\downarrow\emptyset$. First, note that $f(t)\chi_{A_n}(t)\downarrow 0$  as $n \rightarrow \infty$ for a.e. $t\in I$ thus, from Lebesgue dominated convergence theorem, we obtain  that  $C(f\chi_{A_n})(t)\downarrow 0$ as $n \rightarrow \infty$ for a.e. $t\in I$. Since $Cf\in X_a$ and $C(f\chi_{A_n})\leq Cf$ for every $n\in\mathbb{N}$ we have
$$\norm{f\chi_{A_n}}_{CX} = \norm{C(f\chi_{A_n})}_{X} \rightarrow 0 \quad \text{as } n \rightarrow \infty,$$ i.e. $f\in (CX)_a$.
\end{proof}

\begin{lemma}\label{CXbsubsetCXa}
Let $X$ be symmetric function space such that Ces\`aro operator is bounded on $X$ and $X_a=X_b$. Then $(CX)_b\subset C(X_a)$. 
\end{lemma}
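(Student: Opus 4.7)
The plan is to reduce the inclusion to the special case of a bounded, finitely-supported simple function $s$: if one can show $Cs \in X_a$ for every such $s$, then given any $0 \leq f \in (CX)_b$ one approximates $f$ in the $CX$-norm by simple functions $s_n$; the pointwise bound $|Cf - Cs_n| \leq C|f - s_n|$ forces $Cs_n \to Cf$ in $X$, and since $X_a$ is closed one concludes $Cf \in X_a$, i.e.\ $f \in C(X_a)$. Reducing to $f \geq 0$ is harmless, since $f \in C(X_a)$ means $C|f| \in X_a$ by definition.

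So fix a simple $s$ with $0 \leq s \leq M\chi_{[0,R]}$. A direct calculation of the Ces\`aro average gives the pointwise estimate
$$Cs(x) \leq M\chi_{[0,R]}(x) + \frac{MR}{x}\chi_{[R,\infty)}(x).$$
The first summand is bounded with finite-measure support, hence lies in $X_b = X_a$, and by the ideal property of $X_a$ it suffices to also put the second summand there. When $I = [0,1]$ the second summand is supported on $[R,1]$, hence already bounded with finite support, and we are done. In the case $I = [0,\infty)$, each truncation $\frac{1}{x}\chi_{[R,N]}$ is bounded with finite support and therefore in $X_b = X_a$, so by closedness of $X_a$ the whole task reduces to proving $\norm{\frac{1}{x}\chi_{[N,\infty)}}_{X} \to 0$ as $N \to \infty$.

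This tail estimate is the main obstacle, and is precisely where the assumption that $C$ is bounded on $X$ enters in an essential way. By Theorem D this is equivalent to $p(X) > 1$, and I would then fix any $1 < p < p(X)$ and invoke Theorem C to obtain the embedding $L^p \cap L^\infty \hookrightarrow X$. Applied to $\frac{1}{x}\chi_{[N,\infty)}$, whose $L^\infty$-norm equals $1/N$ and whose $L^p$-norm equals $\bigl(\tfrac{N^{1-p}}{p-1}\bigr)^{1/p}$, both quantities tend to $0$ as $N \to \infty$, hence so does the $X$-norm. This closes the argument.
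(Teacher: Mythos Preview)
Your proof is correct and follows essentially the same route as the paper: both reduce to showing $C\chi_{[0,R]} \in X_a$, split this into the bounded piece $\chi_{[0,R]}$ (in $X_b = X_a$) and the tail $\frac{1}{t}\chi_{[R,\infty)}$, and handle the tail via the embedding $L^p \cap L^\infty \hookrightarrow X$ coming from $p(X) > 1$ (Theorems C and D). The only cosmetic difference is that you verify $\frac{1}{t}\chi_{[R,\infty)} \in X_a$ by approximating with truncations $\frac{1}{t}\chi_{[R,N]} \in X_b = X_a$ and showing the remainder $\|\frac{1}{t}\chi_{[N,\infty)}\|_X \to 0$, whereas the paper tests directly against $A_n \downarrow \emptyset$; and you bound $Cs$ for a general simple $s \leq M\chi_{[0,R]}$ in one step, while the paper first treats $\chi_{[0,a]}$ and then invokes the rearrangement inequality $C\chi_A \leq C\chi_{[0,m(A)]}$ for arbitrary $A$.
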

\begin{proof}
We divide the proof into three steps. First, we will show that  $\chi_{[0,a]}\in  C(X_a)$ for every $0<a<\infty$. Fix $a>0$ and note that 
$$C(\chi_{[0,a]})(t)=\chi_{[0,a]}(t)+\frac{a}{t}\chi_{[a,\infty]}(t),$$
for $0 < t \in I$.
Since $X_a=X_b$ thus $\chi_{[0,a]}\in X_a$ and we only need to show that $f(t):=\frac{1}{t}\chi_{[a,\infty]}(t)\in X_a$.
 Let  $(A_n)$ be sequence of measurable subsets of $I$ with $A_n\downarrow\emptyset$. Without loss of generality we can assume that $A_n\subset\supp(f)=[a,\infty)$ for every $n\in\mathbb{N}$. Let $\delta>0$ be arbitrary.
From Lemma \ref{CXaneq0} we have $\chi_{[a,a+\delta]}\in X_a$ therefore there exist
$n_{\delta}\in\mathbb{N}$ such that 
$\norm{\frac{1}{a}\chi_{[a,a+\delta]}\chi_{A_n}}_X\leq\frac{1}{\delta}$ for all $n\geq n_{\delta}$. 
From Theorem C and our assumption that Ces\`aro operator is bounded there exist $p>1$ with $L^p\cap L^{\infty}\hookrightarrow X$. Since $L^p\in (OC)$, we can find  $n_{\delta}'\in\mathbb{N}$ satisfying
$$\norm{\frac{1}{t}\chi_{[\delta,\infty]}\chi_{A_n}}_{L^p}\leq\frac{1}{\delta},$$
for every $n\geq n_{\delta}'$. Let  $n\geq \max\{n_{\delta},n_{\delta}'\}$ we have
\begin{align*}
\norm{f\chi_{A_n}}_X&\leq \norm{\frac{1}{a}\chi_{[a,a+\delta]}\chi_{A_n}}_X+\norm{\frac{1}{t}\chi_{[\delta,\infty]}\chi_{A_n}}_X\\
&\leq\frac{1}{\delta}+D\norm{\frac{1}{t}\chi_{[\delta,\infty]}\chi_{A_n}}_{L^{\infty}\cap L^p}\\
&=\frac{1}{\delta}+D\max\{\frac{1}{\delta},\norm{\frac{1}{t}\chi_{[\delta,\infty]}\chi_{A_n}}_{L^p}\}\\
&\leq \frac{1}{\delta} + D\max\{\frac{1}{\delta},\frac{(p-1)^{-\frac{1}{p}}}{\delta^{1 - \frac{1}{p}}} \},
\end{align*}
where $D > 0$ is inclusion constant. Since $\delta > 0$ was arbitrary, we obtain
$$\norm{f\chi_{A_n}}_X\rightarrow 0 \quad \text{as } n \rightarrow \infty.$$
Consequently,  $f\in X_a$ and hence $\chi_{[0,a]}\in  C(X_a)$.

Now let $A\subset I$ be arbitrary set of finite measure. From Fact \ref{Cfgwiazdka} we have 
$$C(\chi_{A}) \leq C(\chi_{[0, m(A)]}).$$
From a previous step of proof we also know that $C(\chi_{[0, m(A)]}) \in X_a$ thus from ideal property
of $X_a$ we get $C(\chi_{A}) \in X_a$, i.e. $\chi_{A} \in C(X_a).$

Finally, let $f\in (CX)_b$ and $(f_n)$ be a sequence of simple functions with $\lim\limits_{n\rightarrow\infty}f_n=f$ pointwise.
We already know that  $Cf_n\in X_a$ for every $n\in\mathbb{N}$. From continuity of Ces\`aro operator $C : CX \rightarrow X$ we have that $Cf_n\rightarrow Cf$. Therefore $Cf\in X_a$ because $X_a$ is closed and the proof is complete.
\end{proof}

\begin{remark} \label{Rownowaznywarunek}
We can prove more, namely: if $X$ is a symmetric function space with $X_a = X_b$ then
$(CX)_b\subset C(X_a)$ if and only if $\frac{1}{t}\chi_{(\lambda_0, \infty)}(t) \in X_a$ for some $\lambda_0 > 0$. In above proof we in fact proved the sufficiency.
The necessity is even simpler,  if $\frac{1}{t}\chi_{(1, \infty)}(t) \notin X_a$ then $C(\chi_{(0,1)})(t) \notin X_a$,
i.e. $\chi_{(0,1)}(t) \notin C(X_a)$, but of course $\chi_{(0,1)}(t) \in (CX)_b$.
It follows from the proof of remaining lemma that if $C : X \rightarrow X$ and $X_a = X_b$ then $\frac{1}{t}\chi_{[1,\infty)}\in X_a$.
Is also worth noting that in the case of $I=[0,1]$ the assumption $X \neq L^\infty$ is equivalent to $\frac{1}{t}\chi_{(\lambda, \infty)}(t) \in X_a$ for all $0 < \lambda < 1$, see Theorem B.
\end{remark}
\begin{lemma}\label{CXasubsetdelta0}
Let $X$ be symmetric space with $X_a=\{0\}$. Then $(CX)_a\subset \Delta_0.$ 
\end{lemma}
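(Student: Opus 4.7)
The plan is to exploit the hypothesis $X_a=\{0\}$ through Theorem B, which tells us that $X \hookrightarrow L^\infty$. If $CX = \{0\}$ there is nothing to prove, so I assume $CX \neq \{0\}$ and pick $f \in (CX)_a$. Since $|Cf| \leq C|f|$ (Fact \ref{Cfgwiazdka}), it suffices to show $\lim_{t\to 0^+} C|f|(t) = 0$; so without loss of generality I take $f \geq 0$. Note that $f \in CX$ implies $C f(x) = \frac{1}{x}\int_0^x f(t)\,dt$ is finite for some (hence all smaller) $x > 0$, so $f$ is locally integrable near $0$ and $Cf$ is continuous on $(0, \delta)$ for some $\delta > 0$.

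The key observation is the following localization of $C$. Consider the sequence $A_n := I \cap [0, 1/n]$, which is decreasing with $m(\bigcap_n A_n) = 0$, i.e.\ $A_n \downarrow \emptyset$. For $x \in (0, 1/n]$ one has
\begin{equation*}
C(f\chi_{A_n})(x) \;=\; \frac{1}{x}\int_0^x f(t)\chi_{[0,1/n]}(t)\,dt \;=\; \frac{1}{x}\int_0^x f(t)\,dt \;=\; Cf(x),
\end{equation*}
so that $Cf \cdot \chi_{[0,1/n]} \leq C(f\chi_{A_n})$ pointwise. Since $f \in (CX)_a$, the right-hand side tends to $0$ in $X$:
\begin{equation*}
\norm{Cf\cdot \chi_{[0,1/n]}}_{X} \;\leq\; \norm{C(f\chi_{A_n})}_{X} \;=\; \norm{f\chi_{A_n}}_{CX} \;\longrightarrow\; 0.
\end{equation*}

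Now I invoke Theorem B: since $X_a = \{0\}$, we have $X \hookrightarrow L^\infty$ with some embedding constant $A > 0$, giving
\begin{equation*}
\norm{Cf\cdot \chi_{[0,1/n]}}_{L^\infty} \;\leq\; A\, \norm{Cf\cdot \chi_{[0,1/n]}}_{X} \;\longrightarrow\; 0.
\end{equation*}
Because $Cf$ is continuous on $(0, \delta)$, the essential supremum on $[0, 1/n]$ agrees with the actual supremum for $n$ large enough, so $\sup_{0 < x \leq 1/n} Cf(x) \to 0$, which is exactly $\lim_{t \to 0^+} Cf(t) = 0$. Hence $f \in \Delta_0$.

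The proof is short; the only subtle point is recognizing that Theorem B is the right tool to convert an $X$-norm estimate into a pointwise (limit) statement near $0$, which is precisely what $\Delta_0$ asks for. The rest is just the natural identity $C(f\chi_{[0,1/n]}) = Cf$ on $[0, 1/n]$ together with the order continuity of $f$ in $CX$.
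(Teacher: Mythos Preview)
Your proof is correct and follows the same underlying strategy as the paper's: both exploit the identity $C(f\chi_{[0,1/n]})=Cf$ on $[0,1/n]$, the continuity of $Cf$, and Theorem~B. The execution differs slightly: the paper argues by contrapositive, picking a sequence $a_n\to 0$ with $Cf(a_n)>\delta/2$, thickening to neighborhoods by continuity, and then invoking $\varphi_X(0^+)>0$ to show $\norm{f\chi_{[0,1/k)}}_{CX}$ does not tend to zero. You argue directly, using the equivalent condition $X\hookrightarrow L^\infty$ from Theorem~B to pass from $\norm{Cf\chi_{[0,1/n]}}_X\to 0$ immediately to $\norm{Cf\chi_{[0,1/n]}}_{L^\infty}\to 0$, which reads off the limit without any subsequence or neighborhood construction. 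Your route is a touch more streamlined; the paper's route is more explicit about which quantitative lower bound (namely $\frac{\delta}{4}\varphi_X(0^+)$) obstructs order continuity.
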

\begin{proof}
If $CX=\{0\}$ then inclusion is trivial, so we can assume that $CX\neq\{0\}$. Take $0\leq f\notin\Delta_0$. Then
$$\limsup\limits_{t\rightarrow 0^+}Cf(t)=\delta>0.$$
We can find a sequence $(a_n)\subset (0,1)$ such that $a_n\rightarrow 0$ and $Cf(a_n)>\frac{\delta}{2}$ for every $n\in\mathbb{N}$. Since $Cf$ is continuous function, therefore there exist open neighborhood $U_n$ of $a_n$ with $\inf\limits_{t\in U_n}\{Cf(t)\} \geq  \frac{\delta}{4}$ for $n\in\mathbb{N}$. We can chose a subsequence $n_k\in\mathbb{N}$ such that $U_{n_k}\cap[0,\frac{1}{k})\neq \emptyset$ for  $k\in\mathbb{N}$. Now we have
\begin{align*}
\norm{f\chi_{[0,\frac{1}{k})}}_{CX}&= \norm{C(f\chi_{[0,\frac{1}{k})})}_X\\
&=\norm{Cf(t)\chi_{[0,\frac{1}{k})}+\frac{Cf(\frac{1}{k})}{t}\chi_{[\frac{1}{k},\infty)}}_X\\
&\geq\norm{Cf(t)\chi_{[0,\frac{1}{k})}}_X\\
&\geq\norm{\frac{\delta}{4}\chi_{U_{n_k}\cap[0,\frac{1}{k})}}_X
\geq\frac{\delta}{4}\varphi_X(0^+),
\end{align*}
which shows that $f\notin (CX)_a$.
\end{proof}

\begin{lemma}\label{Cxa=CXb2}
Let $X$ be symmetric space with $X_a\neq\{0\}$. Then $$(CX)_a=(CX)_b.$$
\end{lemma}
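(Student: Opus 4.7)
The inclusion $(CX)_{a} \subset (CX)_{b}$ is a general fact for Banach ideal spaces (cf.\ the discussion preceding Theorem B), so the real content of the lemma is the reverse inclusion $(CX)_{b} \subset (CX)_{a}$. We may assume $CX \neq \{0\}$, since otherwise the statement is trivial; this ensures, via Lemma \ref{niepustoscCX}, that $\norm{\frac{1}{t}\chi_{[a,\infty)}}_{X} < \infty$ for every $a > 0$. The hypothesis $X_{a} \neq \{0\}$ combined with Theorem B gives $X_{a} = X_{b}$, so in particular $\chi_{[0,a]} \in X_{a}$ for every $a > 0$ and $\varphi_{X}(0^{+}) = 0$.

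The key step is to show that $\chi_{[0,a]} \in (CX)_{a}$ for every $a > 0$. Given an arbitrary sequence $A_{n} \downarrow \emptyset$, set $B_{n} := [0,a] \cap A_{n}$, so that $m(B_{n}) \to 0$. A direct computation yields
\begin{equation*}
C\chi_{B_{n}}(t) = \chi_{[0,a]}(t)\,\frac{m(B_{n} \cap [0,t])}{t} + \chi_{(a,\infty)}(t)\,\frac{m(B_{n})}{t}.
\end{equation*}
The first summand is bounded pointwise by $\chi_{[0,a]}$ and tends to $0$ a.e., so its $X$-norm converges to $0$ by the order continuity of $\chi_{[0,a]} \in X_{a}$. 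The second summand has $X$-norm at most $m(B_{n})\norm{\frac{1}{t}\chi_{(a,\infty)}}_{X} \to 0$. Thus $\norm{\chi_{[0,a]}\chi_{A_{n}}}_{CX} \to 0$, proving $\chi_{[0,a]} \in (CX)_{a}$.

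Plugging $A_{n} := [0, 1/n)$ into the order continuity of $\chi_{[0,1]}$ immediately gives $\varphi_{CX}(0^{+}) = 0$. For an arbitrary $A \subset I$ with $m(A) < \infty$, the decomposition $\chi_{A} = \chi_{A \cap [1/k,\infty)} + \chi_{A \cap [0, 1/k)}$ places the first term in $(CX)_{a}$ by Lemma \ref{CXaneq0}, while the second satisfies $\norm{\chi_{A \cap [0,1/k)}}_{CX} \leq \varphi_{CX}(1/k) \to 0$ as $k \to \infty$. Since $(CX)_{a}$ is closed, $\chi_{A} \in (CX)_{a}$, and the ideal property of $(CX)_{a}$ then forces every bounded function with support of finite measure into $(CX)_{a}$; passing to the closure propagates this to all of $(CX)_{b}$.

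The main obstacle is the opening step, which requires extracting $\chi_{[0,a]} \in (CX)_{a}$ from just two ingredients: order continuity of characteristic functions in $X_{a}$ (for the "local" part of $C\chi_{B_{n}}$ on $[0,a]$) and the finite norm of $\frac{1}{t}\chi_{[a,\infty)}$ coming from nontriviality of $CX$ (for the decaying tail). Once $\chi_{[0,a]}$ is in hand, the rest is routine bookkeeping via the closure and ideal properties of $(CX)_{a}$.
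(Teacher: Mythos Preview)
Your proof is correct and shares the same skeleton as the paper's---both hinge on showing $\chi_{[0,\delta)}\in(CX)_a$ and then propagating to all of $(CX)_b$ via the ideal and closure properties---but you reach the key step by a different and in fact more self-contained route. The paper asserts $\chi_{[0,\delta)}\in C(X_a)$, i.e.\ $C\chi_{[0,\delta)}\in X_a$; on $I=[0,\infty)$ this would require $\tfrac{1}{t}\chi_{[\delta,\infty)}\in X_a$, a condition which (cf.\ Remark~\ref{Rownowaznywarunek}) is \emph{not} automatic from $X_a\neq\{0\}$ alone. You sidestep this entirely by estimating $C\chi_{B_n}$ pointwise and invoking only $\chi_{[0,a]}\in X_a$ for the part on $[0,a]$ together with $\tfrac{1}{t}\chi_{(a,\infty)}\in X$ (from $CX\neq\{0\}$) for the tail, so your argument goes through under exactly the stated hypotheses. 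The final assembly is also organized differently: the paper fixes $f\in(CX)_b$, approximates by simple $f_n$, and splits $f=f\chi_{[0,\delta)}+f\chi_{[\delta,\infty)}$, whereas you first put every $\chi_A$ with $m(A)<\infty$ into $(CX)_a$ and then close up. One cosmetic remark: since $CX$ is not symmetric, the symbol $\varphi_{CX}$ is nonstandard; simply writing $\norm{\chi_{[0,1/k)}}_{CX}$ removes any ambiguity.
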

\begin{proof}
Take $\delta > 0$. Let $(f_n) \subset (CX)_b$ be a sequence chosen so that $f_n \rightarrow f$ almost everywhere. Using Corollary \ref{CXaneq0c} we
conclude that $f\chi_{[\delta, \infty)} \in (CX)_a$. Since $C(X_a) \subset (CX)_a$ 
and
$X_a\neq\{0\}$ we have
$$\chi_{[0,\delta)} \in C(X_a) \subset (CX)_a.$$
Observe that
$$f_n\chi_{[0,\delta)} \leq f_n^{*}(0)\chi_{[0,\delta)},$$
thus $f_n\chi_{[0,\delta)} \in (CX)_a$ using the ideal property of $(CX)_a$. From continuity of $C : CX \rightarrow X$ we have 
$f_n\chi_{[0,\delta)} \rightarrow f\chi_{[0,\delta)}$ in $CX$. This means that $f\chi_{[0,\delta)} \in (CX)_a$ because $(CX)_a$
is closed.
\end{proof}

\begin{theorem}\label{CXa}
Let $X$ be symmetric function space. Then one of following holds:
\begin{enumerate}
\item $(CX)_a=(CX)_b$ if $X_a\neq\{0\}$,
\item $(CX)_a=(CX)_b\cap\Delta_0$ if $X_a=\{0\}$.
\end{enumerate}
In particular, if Ces\`aro operator is bounded on $X$ and $X_a\neq\{0\}$ then $$(CX)_a=C(X_a).$$
\end{theorem}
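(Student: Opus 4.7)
The plan is to assemble the statement from the lemmas already proved in this section, treating the three claims in order.

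For part (i), where $X_a\neq\{0\}$, the equality $(CX)_a=(CX)_b$ is nothing other than Lemma \ref{Cxa=CXb2}, so I would simply invoke it. (The inclusion $(CX)_a\subset(CX)_b$ is the standard fact $E_a\subset E_b$ for any Banach ideal space recalled in the preliminaries, and the reverse inclusion is exactly Lemma \ref{Cxa=CXb2}.)

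For part (ii), where $X_a=\{0\}$, the inclusion $(CX)_b\cap\Delta_0\subset(CX)_a$ is supplied by Lemma \ref{delta0subsetCXa}. For the reverse inclusion I would argue that the general preliminaries give $(CX)_a\subset(CX)_b$, while Lemma \ref{CXasubsetdelta0} gives $(CX)_a\subset\Delta_0$, hence
\[
(CX)_a\subset(CX)_b\cap\Delta_0.
\]

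For the ``in particular'' statement, assume that the Ces\`aro operator is bounded on $X$ and that $X_a\neq\{0\}$. The plan is the following chain of inclusions. By part (i), $(CX)_a=(CX)_b$. Since $X$ is a symmetric space, the preliminaries (the dichotomy recorded before Theorem B) yield $X_a=X_b$ in this situation, because $X_a\neq\{0\}$. With both hypotheses of Lemma \ref{CXbsubsetCXa} now in force, we obtain $(CX)_b\subset C(X_a)$. Combining with Lemma \ref{CXasubsetCXa} gives
\[
(CX)_a=(CX)_b\subset C(X_a)\subset (CX)_a,
\]
so all inclusions are equalities and $(CX)_a=C(X_a)$, as required.

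The only non-routine step is the ``in particular'' conclusion, and within it the one place some care is needed is invoking $X_a=X_b$: this is not an extra hypothesis but follows, for symmetric $X$, from the preliminaries once $X_a\neq\{0\}$ (the symmetric dichotomy $X_a=\{0\}$ or $X_a=X_b$). Once that observation is noted, Lemma \ref{CXbsubsetCXa} applies and the rest is a bookkeeping of inclusions already established above.
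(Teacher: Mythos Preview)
Your proof is correct and follows essentially the same route as the paper: both assemble the three claims by invoking Lemma~\ref{Cxa=CXb2} for part~(i), combining Lemma~\ref{delta0subsetCXa} with Lemma~\ref{CXasubsetdelta0} and the general inclusion $(CX)_a\subset(CX)_b$ for part~(ii), and chaining Lemmas~\ref{CXasubsetCXa} and~\ref{CXbsubsetCXa} (after noting $X_a=X_b$ via Theorem~B) for the ``in particular'' statement. The only cosmetic difference is that the paper records the $X_a=X_b$ observation at the start of case~(i) rather than when it is first needed.
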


\begin{proof}
Suppose $X_a\neq\{0\}$. Then $X_a=X_b$ from  Theorem B. This is the case of Lemma \ref{Cxa=CXb2} so we have $(CX)_a=(CX)_b$. If we additionally assume that the Ces\`aro operator $C$ is bounded on $X$, using Lemma \ref{CXasubsetCXa} and Lemma \ref{CXbsubsetCXa} we have
$$C(X_a)\subset (CX)_a = (CX)_b\subset C(X_a).$$
Therefore $(CX)_a=C(X_a)$.

Assume now that $X_a=\{0\}.$  From Lemma \ref{CXasubsetdelta0} we have $(CX)_a\subset \Delta_0$. Since we always have that
$(CX)_a\subset(CX)_b $ hence $(CX)_a\subset (CX)_b\cap\Delta_0$. Combining this with Lemma \ref{delta0subsetCXa} we obtain
$$(CX)_b\cap\Delta_0\subset (CX)_a\subset (CX)_b\cap\Delta_0,$$
thus  $(CX)_a=(CX)_b\cap\Delta_0$.
\end{proof}

\begin{remark} \label{CXawarunekrownowazny1/x}
The previous theorem can be formulated in a more concise form if we assume that the Ces\`aro operator is bounded on
symmetric space $X$. In this case 
$$(CX)_a=C(X_a)+(CX)_b\cap\Delta_0.$$
Moreover, it follows from Remark \ref{Rownowaznywarunek} that if $X$ is symmetric space and $X_a$ is non-trivial then $(CX)_a=C(X_a)$ if and only if 
$\frac{1}{t}\chi_{(\lambda_0, \infty)}(t) \in X_a$ for some $\lambda_0 > 0$. Additionally, as consequence of Remark \ref{Rownowaznywarunek}, we have $(CX)_a=C(X_a)$  if and only if $X \neq L^\infty$ in the case of $I=[0,1]$.
\end{remark}

\begin{lemma} \label{fgwiazdka}
Let $X$ be a symmetric space such that the Ces\`aro operator is bounded on $X$. If $f \in  X\setminus X_a$ then $f^* \in  CX \setminus (CX)_a$.
\end{lemma}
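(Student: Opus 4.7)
The plan is to split into two cases according to whether $X_a$ is trivial, matching the dichotomy provided by Theorem \ref{CXa}. Before splitting, I would record the easy observations: since $X$ is symmetric, $\|f^*\|_X = \|f\|_X$, so $f^* \in X$; and since $C$ is bounded on $X$, Theorem E gives $X \hookrightarrow CX$, so $f^* \in CX$. Moreover, Theorem A (i) turns the hypothesis $f \notin X_a$ into $f^* \notin X_a$. So the only real content is to verify $f^* \notin (CX)_a$, and the single tool used in both cases is the pointwise bound $f^*(t) \leq C(f^*)(t)$ for every $t > 0$, valid because $f^*$ is a nonnegative decreasing function (the average of a decreasing function over $[0,t]$ is at least its value at $t$).

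In Case 1, when $X_a \neq \{0\}$, the ``in particular'' part of Theorem \ref{CXa} together with boundedness of $C$ on $X$ gives the clean representation $(CX)_a = C(X_a)$. I would argue by contradiction: if $f^* \in (CX)_a = C(X_a)$, then $C(f^*) \in X_a$; but $X_a$ is an order ideal of $X$ and $0 \leq f^* \leq C(f^*)$, forcing $f^* \in X_a$ and contradicting what was already established. Hence $f^* \notin (CX)_a$.

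In Case 2, when $X_a = \{0\}$, Theorem \ref{CXa} (ii) describes $(CX)_a$ as $(CX)_b \cap \Delta_0$, so it suffices to show $f^* \notin \Delta_0$. The hypothesis $f \notin X_a = \{0\}$ now just says $f \neq 0$, which gives $f^*(0^+) > 0$ (possibly $+\infty$). The same monotone bound $C(f^*)(t) \geq f^*(t)$ then yields $\liminf_{t \to 0^+} C(f^*)(t) \geq f^*(0^+) > 0$, so $C(f^*)$ does not vanish at $0^+$ and $f^* \notin \Delta_0$.

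The argument is short and there is no serious obstacle; the only point requiring care is using the boundedness of $C$ in precisely the two places it is needed, namely to ensure $f^* \in CX$ in the preliminary step and to access the cleaner identity $(CX)_a = C(X_a)$ in Case 1. Everything else reduces to the pointwise inequality $f^* \leq C(f^*)$ combined with the appropriate ideal/limit property suited to each description of $(CX)_a$.
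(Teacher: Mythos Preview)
Your proof is correct and follows essentially the same route as the paper: the same two-case split via Theorem \ref{CXa}, the same use of Theorem A (i) and the pointwise bound $f^* \leq C(f^*)$, and the same ideal-property argument in Case 1 and $\Delta_0$ argument in Case 2. The only cosmetic differences are that you phrase Case 1 as a contradiction (the paper writes it directly) and use $\liminf$ where the paper uses $\limsup$; neither affects the substance.
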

\begin{proof}
Take an element $f \in  X\setminus X_a$. From a symmetry of $X$ we get $f^{*} \in X$ and $C(f^*)\in X$
since the operator $C$ is bounded on $X$. Now we have to consider two cases:
\begin{enumerate}
\item $X_a\neq\{0\}$. Then, from Theorem \ref{CXa}, $(CX)_a=C(X_a)$. From Theorem A (i) $f^*\notin  X_a$.  From ideal property of $X_a$ and since $C(f^*)\geq f^*$, we have that  $C(f^*)\notin  X_a$, i.e. $$f^*\notin  C(X_a)=(CX)_a.$$
\item $X_a=\{0\}$. Observe that $0\neq f$, since $0\in  X_a$ thus $$\limsup\limits_{t\rightarrow 0^+}C(f^*)(t)\geq\limsup\limits_{t\rightarrow 0^+}f^*(t)>0.$$ From Theorem \ref{CXa} we obtain $f^*\notin \Delta_0\cap (CX)_b=(CX)_a$. 
\end{enumerate}
\end{proof}

It is time to give proof of the Theorem \ref{glownywynik} announced at the beginning of this section.
\bigskip

\noindent
\textit{Proof of Theorem \ref{glownywynik}.}
\textit{Necessity.} Let $X$ be a symmetric space such that Ces\`aro operator is bounded on $X$ and suppose $X \in (OC)$.
Then $X = X_a$ and from Theorem \ref{CXa} we get
$$(CX)_a = C(X_a) = CX,$$
which means that $CX \in (OC)$.
\newline
\textit{Sufficiency.} If $X\notin(OC)$ then there exist an element $f \notin X_a$. From Lemma \ref{fgwiazdka} $f^* \notin  (CX)_a$  and consequently $CX \notin (OC)$ which completes the proof. $\hfill\ensuremath{\square}$

\section{Applications}

Although each of the following spaces belongs to the class of symmetric spaces, in this special
cases, the criteria for OC-points become more specified.

\subsection{The Ces\`aro function spaces $Ces_p$}

\begin{remark} \label{Eapuste}
We have the following characterization of the closure of the set of simple functions in the space $L^\infty$
$$(L^\infty[0,1])_b = L^\infty[0,1],$$
and
$$(L^\infty[0,\infty))_b = \{f\in L^\infty[0,\infty) : \lim\limits_{t\rightarrow \infty} f(t) = 0\}.$$
Of course, $(L^\infty)_a = \{0\}$ and $p(L^\infty) > 1$. Let us define a set $\Delta_\infty = \Delta_\infty(E) := \{f\in E : \lim\limits_{t\rightarrow \infty} Cf(t) = 0\}$ for $E$ being a Banach ideal space on $[0,\infty)$. With this notation, we get the following reformulation of Theorem \ref{CXa} (ii):
$$
(Ces_\infty)_a = \left\{ \begin{array}{ll}
\Delta_0 = \{f\in Ces_\infty : \lim\limits_{t\rightarrow 0^{+}} Cf(t) = 0 \} & \textrm{if $I = [0,1]$}\\
\Delta_0 \cap \Delta_\infty = \{f\in Ces_\infty : \lim\limits_{t\rightarrow 0^{+},\infty} Cf(t) = 0 \} & \textrm{if $I = [0,\infty).$}
\end{array} \right.
$$
The space $Ces_\infty[0,1]$ is known as Korenblyum-Krein-Levin space $K$ and it is also known that
$$ K_a = \{f \in K : \lim\limits_{h\rightarrow 0^+} \frac{1}{h}\int_{0}^{h}|f|\text{d}m = 0 \} = \Delta_0(K) ,$$
see \cite[pp. 469-471]{Za83}. Thus, the above characterization is a generalization of this classical result.
\end{remark}

It is worth to mention that $Ces_1[0,1] \equiv L^1(\ln(1/t))$. From Lebesgue dominated convergence theorem we obtain that
$L^1(\ln(1/t)$ is order continuous.  Moreover, $Ces_1[0,\infty) = \{0\}$, see \cite[Th. 3.1 (a)]{AM14}. Since the spaces $L^{p}$ are order continuous for $1 \leq p < \infty$ and $L^\infty_a = \{0\}$, so as a consequence of Theorem \ref{CXa} and Remark \ref{Eapuste} we obtain well known result (cf. \cite[Th. 3.1 (b)]{AM14}).

\begin{proposition} \label{OCLp} We have the following characterization
$$
(Ces_p)_a = \left\{ \begin{array}{ll}
Ces_p & \textrm{when $1 \leq p < \infty$}\\
\{f\in Ces_\infty : \lim\limits_{t\rightarrow 0^{+},\infty} Cf(t) = 0 \} & \textrm{when $p = \infty .$}
\end{array} \right.
$$
In particular, the Ces\`aro function space $Ces_p$ is order continuous if and only if $1 \leq p < \infty$.
\end{proposition}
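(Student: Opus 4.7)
The plan is to split on the value of $p$ and reduce each case to a result established earlier in the paper.

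First, for $1 < p < \infty$, I would observe that the lower Boyd index satisfies $p(L^p) = p > 1$, so Theorem D guarantees that the Ces\`aro operator is bounded on $L^p$. Since $L^p$ is (classically) order continuous, Theorem \ref{glownywynik} immediately yields $(Ces_p)_a = Ces_p$.

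Next, for $p = 1$, Theorem \ref{glownywynik} is unavailable because $p(L^1) = 1$, so I would handle the two intervals separately. On $I = [0,\infty)$ the space collapses: $Ces_1[0,\infty) = \{0\}$ by \cite[Th.~3.1(a)]{AM14}, which is vacuously order continuous. On $I = [0,1]$ I would invoke the isometric identification $Ces_1[0,1] \equiv L^1(\ln(1/t))$ and note that order continuity of this weighted $L^1$-space is a direct consequence of the Lebesgue dominated convergence theorem, exactly as indicated in the paragraph preceding the proposition.

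Finally, for $p = \infty$, since $(L^\infty)_a = \{0\}$, Theorem \ref{CXa}(ii) gives $(Ces_\infty)_a = (Ces_\infty)_b \cap \Delta_0$; combining this with the description of $(L^\infty)_b$ recorded in Remark \ref{Eapuste} produces the stated formula, $\Delta_0$ on $I = [0,1]$ and $\Delta_0 \cap \Delta_\infty$ on $I = [0,\infty)$. The final assertion of the proposition is then immediate: $Ces_p$ is order continuous for $1 \leq p < \infty$ by the two previous paragraphs, while for $p = \infty$ the function $\chi_{[0,1]} \in Ces_\infty$ satisfies $C\chi_{[0,1]}(t) = 1$ on $(0,1]$, hence $\chi_{[0,1]} \notin \Delta_0$ and consequently $\chi_{[0,1]} \notin (Ces_\infty)_a$. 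Since every ingredient is already assembled, there is no real obstacle here; the only point requiring care is the bookkeeping of the two intervals in the case $p = \infty$, where the extra tail condition $\lim_{t\to\infty} Cf(t) = 0$ enters only when $I = [0,\infty)$.
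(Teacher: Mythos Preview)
Your proposal is correct and follows essentially the same route as the paper: the paragraph preceding the proposition already handles $p=1$ exactly as you do, and the paper then says the rest is ``a consequence of Theorem \ref{CXa} and Remark \ref{Eapuste}'', which is precisely the combination you spell out (your appeal to Theorem \ref{glownywynik} for $1<p<\infty$ is equivalent, since that theorem is itself derived from Theorem \ref{CXa}). The only imprecision is your phrase ``the description of $(L^\infty)_b$ recorded in Remark \ref{Eapuste}'' --- what you actually need, and what Remark \ref{Eapuste} actually provides, is the formula for $(Ces_\infty)_a$ itself, not $(L^\infty)_b$; just cite the remark for that conclusion directly.
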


\subsection{The Ces\`aro-Orlicz function spaces $Ces_\Phi$} \label{Cesaro-Orlicz subsection}

A function $\Phi :[0,\infty )\rightarrow \lbrack 0,\infty ]$ is called an
Orlicz function if:

\begin{enumerate}
\item $\Phi$ is convex,
\item $\Phi (0)=0$, $\Phi (\infty)=\infty$,
\item $\Phi $ is neither identically equal to zero nor infinity on $(0,\infty )$.
\end{enumerate}
For more information about Orlicz functions see \cite{Ch96} and \cite{KR61}.

Let us denote by
$$a_{\Phi } = \sup \{u\geq 0:\Phi (u)=0\},$$
and
$$b_{\Phi } = \sup \{u>0:\Phi (u)<\infty \}.$$
We have, $a_{\Phi }<\infty $ and $b_{\Phi }>0$, since an Orlicz function is
neither identically equal to zero nor infinity on $[0,\infty )$. The
function $\varphi $ is continuous and non-decreasing on $[0,b_{\Phi })$
and is strictly increasing on $[a_{\Phi },b_{\Phi })$. 

We say an Orlicz function $\Phi $ satisfies the condition $\Delta _{2}$
for large arguments ($\Phi \in \Delta _{2}(\infty )$ for short) if there
exists $K>0$ and $u_{0}>0$ such that $\Phi (u_{0})<\infty $ and 
\begin{equation*}
\Phi (2u)\leq K\Phi (u)
\end{equation*}%
for all $u\in \lbrack u_{0},\infty )$. Similarly, we can define the
condition $\Delta _{2}$ for small, with $\Phi (u_{0})>0$ $(\Phi \in
\Delta _{2}(0))$ or for all arguments $(\Phi \in \Delta _{2}(\mathbb{R_{+}%
}))$. These conditions play a crucial role in the theory of Orlicz spaces,
see \cite{Ch96}, \cite{KR61}, \cite{Ma89} and \cite{Mu83}.

The Orlicz function space $L^{\Phi }=L^{\Phi }(I)$ generated by an
Orlicz function $\Phi $ is defined by 
\begin{equation*}
L^{\Phi } := \{f\in L^{0}(I):I_{\Phi }(f/\lambda )<\infty \ \text{for some%
}\ \lambda =\lambda (f)>0\},
\end{equation*}%
where $I_{\Phi }(f)=\int_{I}\Phi (|f(t)|)\text{d}t$ is a convex modular (for
the theory of Orlicz spaces and modular spaces see \cite{Ma89} and \cite%
{Mu83}). The space $L^{\Phi }$ is a Banach ideal function space with the
Luxemburg-Nakano norm 
\begin{equation*}
\left\Vert f\right\Vert _{\Phi }=\inf \{\lambda >0:I_{\Phi }(f/\lambda
)\leq 1\}.
\end{equation*}

Let us recall that
$$L^\Phi_b = \{f\in Ces_\Phi : \forall_{\lambda > 0} \exists_{s_\lambda \in S_0}\ \text{such that}\ I_\Phi(\lambda(f-s)) < \infty \},$$
where $S_0$ is the ideal of bounded functions with support of finite measure. 
It is also easy to see that
$$L^\Phi_b = \{f\in L^\Phi : I_\Phi(\lambda f) < \infty\ \text{for all}\  \lambda > 0\} = L^\Phi_a,$$
when $b_\Phi = \infty$, cf. \cite{GH99}, \cite[Th. 3.3 (a), p. 17]{Ma89} and Theorem B.

The Ces\`aro-Orlicz function space $Ces_{\Phi }=Ces_{\Phi }(I)$ is
defined by $Ces_{\Phi }(I) := CL^{\Phi }(I)$. Consequently, the norm in
the space $Ces_{\Phi }$ is given by the formula 
\begin{equation*}
\left\Vert f\right\Vert _{Ces_\Phi}=\inf \{\lambda >0:\rho _{\Phi
}(f/\lambda )\leq 1\},
\end{equation*}%
where $\rho _{\Phi }(f)=I_{\Phi }(C|f|)$ is a convex modular.
Note that $Ces_{\Phi }[0,1]\neq \{0\}$ for any Orlicz function $\Phi $,
see Lemma \ref{niepustoscCX}, \cite[Remark 4]{KK16}. Moreover, for the equivalent conditions
to the non-triviality of $Ces_\Phi[0,\infty)$ see \cite[Proposition 3]{KK16}.

\begin{lemma} \label{opiscesfib}
Let $\Phi$ be an Orlicz function. Then
$$(Ces_\Phi)_b = \{f\in Ces_\Phi : \forall_{\lambda > 0} \exists_{s_\lambda \in S_0}\ \rho_\Phi(\frac{f-s_\lambda}{\lambda}) < \infty \}. $$
\end{lemma}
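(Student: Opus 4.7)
I will prove the two inclusions separately, with the easy direction first and the real content residing in the reverse inclusion.

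For the forward inclusion ``$\subset$'', I take $f \in (Ces_\Phi)_b$ and $\lambda > 0$. Because $(Ces_\Phi)_b$ is, by definition, the closure in $Ces_\Phi$ of the set $S_0$ of bounded functions with support of finite measure, there exists $s \in S_0$ with $\norm{f - s}_{Ces_\Phi} < \lambda$. The definition of the Luxemburg--Nakano norm then yields $\rho_\Phi((f-s)/\lambda) \leq 1 < \infty$, so $f$ lies in the right-hand side.

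For the reverse inclusion ``$\supset$'', I fix $f$ in the right-hand side together with $\epsilon > 0$, and aim to produce $s \in S_0$ with $\norm{f - s}_{Ces_\Phi} \leq \epsilon$. Applying the hypothesis with $\lambda = \epsilon$ yields $s_0 \in S_0$ such that $h := f - s_0$ satisfies $I_\Phi(C|h|/\epsilon) < \infty$; in particular $C|h|$ is finite a.e., hence $|h|$ is locally integrable on $I$. I then truncate: set $B_n := [0,n] \cap \{|h| \leq n\}$ and $h_n := h\chi_{B_n}$, so that $h_n \in S_0$ and $h_n \to h$ a.e. The crucial step is a double application of Lebesgue's dominated convergence theorem. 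First, for a.e.\ fixed $x$, the functions $|h - h_n|$ on $[0,x]$ are dominated by $|h| \in L^1(0,x)$ and tend pointwise to zero, giving $C|h - h_n|(x) \to 0$ for a.e.\ $x$. Second, by continuity of $\Phi$ at $0$, the functions $\Phi(C|h - h_n|/\epsilon)$ tend to $0$ a.e.\ and are dominated by the integrable function $\Phi(C|h|/\epsilon)$, which gives $I_\Phi(C|h - h_n|/\epsilon) \to 0$. Hence $\rho_\Phi((h - h_n)/\epsilon) \leq 1$ for $n$ large, equivalently $\norm{h - h_n}_{Ces_\Phi} \leq \epsilon$. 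Setting $s := s_0 + h_n \in S_0$ yields $f - s = h - h_n$ and therefore $\norm{f - s}_{Ces_\Phi} \leq \epsilon$.

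The main obstacle is the ``$\supset$'' direction, and specifically justifying both dominated convergence steps in the generality of an Orlicz function that may take the value $\infty$; the decisive observation is that the finiteness of $I_\Phi(C|h|/\epsilon)$ already forces $C|h(x)|/\epsilon$ to lie in $[0, b_\Phi)$ for a.e.\ $x$, where $\Phi$ is finite, continuous and monotone, so that only these elementary properties of $\Phi$ are needed in the argument.
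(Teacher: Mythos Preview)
Your proof is correct and, for the substantive inclusion ``$\supset$'', follows essentially the same route as the paper: approximate $h=f-s_0$ by truncations in $S_0$ and pass to the limit via a double application of dominated convergence (the paper uses an increasing sequence $y_n\uparrow f-s_\lambda$ of simple functions and phrases the second step as order continuity of $L^1$, but the mechanism is identical). For the inclusion ``$\subset$'' you give a direct two-line argument from the definition of the Luxemburg--Nakano norm, whereas the paper proceeds indirectly by first proving that the right-hand side $H(S_0)$ is closed and then using $S_0\subset H(S_0)$ to get $(Ces_\Phi)_b=\mathrm{cl}\,S_0\subset H(S_0)$; your route here is cleaner and avoids that extra lemma. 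One small imprecision: the finiteness of $I_\Phi(C|h|/\epsilon)$ yields only $C|h|(x)/\epsilon\leq b_\Phi$ a.e., not strict inequality (equality on a set of positive measure is possible when $\Phi(b_\Phi^-)<\infty$), but this does not affect your argument since all you actually use is monotonicity of $\Phi$ for the domination and continuity of $\Phi$ at $0$ for the pointwise convergence.
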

\begin{proof}
The idea of this proof comes from \cite[Th. 3.1]{GH99}. Without loss of generality, we can consider only the case when the
Ces\`aro-Orlicz function space is non-trivial.
Like in mentioned article we use the notation
$$H(S_0) =  \{f\in Ces_\Phi : \forall_{\lambda > 0} \exists_{s_\lambda \in S_0}\ \rho_\Phi(\frac{f-s_\lambda}{\lambda}) < \infty \}.$$
We claim that:
\begin{enumerate}
\item $H(S_0)$ is closed,
\item $H(S_0) \subset \text{cl}{S_0}$, where by $\text{cl}{S_0}$ we mean the closure of $S_0$ in $Ces_\Phi$.
\end{enumerate}

Take $f \in \text{cl}H(S_0)$ and fix $\lambda > 0$. There exist a sequence $(f_n) \subset H(S_0)$ with $f_n \rightarrow f$.
From \cite[Th. 1.3, p. 8]{Ma89} there exist $n_0 \in \mathbb{N}$ such that 
$$\rho_\Phi(\frac{f-f_{n_0}}{\lambda/2}) < \infty.$$
Let $s\in S_0$ be such that 
 $$\rho_\Phi(\frac{f_{n_0}-s}{\lambda/2}) < \infty.$$
Since modular $\rho_\Phi$ is convex, we have
\begin{align*}
 \rho_\Phi(\frac{f-s}{\lambda})&=\rho_\Phi(2\frac{(f-f_{n_0}+f_{n_0}-s)}{2\lambda})\\
 &\leq\frac{1}{2}(\rho_\Phi(\frac{f-f_{n_0}}{\lambda/2})+\rho_\Phi(\frac{f_{n_0}-s}{\lambda/2}))<\infty,
\end{align*}
and (i) follows.

Pick $f \in H(S_0)_+$ and $\lambda > 0$. From the definition of $H(S_0)$ there exist $0\leq s_\lambda \leq f$ such that
$$\rho_\Phi(\frac{f-s_\lambda}{\lambda}) < \infty.$$
Take a sequence $(y_n) \subset S_0$ with $y_n \uparrow f - s_\lambda$ a.e. on $I$. Since $0 \downarrow f - s_\lambda - y_n \leq f - s_\lambda$ a.e. on $I$ thus using Lebesgue dominated convergence theorem we get $C(f - s_\lambda - y_n) \downarrow 0$ a.e. on $I$.
Because $L^1\in (OC)$ and
$$\rho_\Phi(\frac{f-s_\lambda-y_n}{\lambda}) \leq \rho_\Phi(\frac{f-s_\lambda}{\lambda}) < \infty,$$
for all $n \in \mathbb{N}$, so
$$\rho_\Phi(\frac{f-s_\lambda-y_n}{\lambda}) \rightarrow 0.$$
Therefore there  exist $n_0 \in \mathbb{N}$ with
$$\rho_\Phi(\frac{f-s_\lambda-y_{n_0}}{\lambda}) < \lambda,$$
i.e. $\text{dist}(f, S_0) \leq \lambda$, see \cite{GH99}. Since $\lambda > 0$ was arbitrary so $f\in \text{cl}{S_0}$. This completes the proof of part (ii).

It is clear that $\text{cl}{S_0} = (Ces_\Phi)_b$ and $S_0 \subset H(S_0)$. Combining this observation with (i) and (ii) we conclude that
$$H(S_0) \subset \text{cl}S_0 = (Ces_\Phi)_b,$$
and
$$(Ces_\Phi)_b = \text{cl}{S_0} \subset \text{cl}H(S_0) = H(S_0),$$
i.e. $(Ces_\Phi)_b = H(S_0)$.
\end{proof}

Next proposition is a generalisation of Theorem 5, Theorem 7 and Corollary 8 from \cite{KK16}.

\begin{proposition} Let $\Phi$ be an Orlicz function. Then
$$
(Ces_\Phi)_a = \left\{ \begin{array}{ll}
\{f\in Ces_\Phi :\rho_\Phi(\lambda f) < \infty\ \text{for all}\  \lambda > 0\} & \textrm{if $b_\Phi = \infty$}\\

\{f\in Ces_\Phi : \forall_{\lambda > 0} \exists_{s_\lambda \in S_0}\ \rho_\Phi(\lambda(f-s)) < \infty\ \text{and}\ \lim\limits_{t\rightarrow 0^{+}} Cf(t) = 0 \} & \textrm{if $a_\Phi = 0, b_\Phi < \infty$}\\
\{f\in Ces_\Phi : \lim\limits_{t\rightarrow 0^{+},\infty} Cf(t) = 0 \} & \textrm{if $a_\Phi > 0, b_\Phi < \infty$}
\end{array} \right.
$$
Moreover, if $p(L^\Phi)>1$ then the Ces\`aro-Orlicz function space $Ces_\Phi$ is order continuous if and only if $\varphi \in \Delta_2$ (i.e. if $p(L^\Phi)>1$ then $Ces_\Phi \in (OC)$ if and only if $L^\Phi \in (OC)$).
\end{proposition}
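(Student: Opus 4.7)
My plan is to apply Theorem \ref{CXa} with $X=L^\Phi$, combine it with the description of $(Ces_\Phi)_b$ supplied by Lemma \ref{opiscesfib}, and refine the output in each of the three cases according to $(a_\Phi, b_\Phi)$. The preliminary step is to translate the alternative ``$L^\Phi_a\neq\{0\}$'' into a condition on $\Phi$. Since the fundamental function of $L^\Phi$ is $\varphi_{L^\Phi}(t)=1/\Phi^{-1}(1/t)$, one has $\varphi_{L^\Phi}(0^{+})=1/b_\Phi$ (with the convention $1/\infty=0$), so by Theorem B, $L^\Phi_a\neq\{0\}$ if and only if $b_\Phi=\infty$. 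This fixes the case split in the proposition.

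In Case 1 ($b_\Phi=\infty$), Theorem \ref{CXa}(i) gives $(Ces_\Phi)_a=(Ces_\Phi)_b$. I would then identify $(Ces_\Phi)_b$ with $\{f\in Ces_\Phi:\rho_\Phi(\lambda f)<\infty\text{ for all }\lambda>0\}$. One inclusion is obtained by taking $s=0$ in Lemma \ref{opiscesfib}; for the other, given $\mu>0$, Lemma \ref{opiscesfib} supplies $s\in S_0$ with $\rho_\Phi(2\mu(f-s))<\infty$, and convexity of $\rho_\Phi$ yields
\[
\rho_\Phi(\mu f)\leq \tfrac{1}{2}\rho_\Phi(2\mu(f-s))+\tfrac{1}{2}\rho_\Phi(2\mu s).
\]
The second summand is finite precisely because $b_\Phi=\infty$ makes $\Phi$ everywhere finite (so $\Phi(2\mu C|s|)$ is bounded on compact intervals), together with the tail estimate $C|s|(x)\sim\|s\|_1/x$ on $I=[0,\infty)$ handled by an elementary change of variable. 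This recovers \cite[Th.~5(i)]{KK16}.

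In Cases 2 and 3 ($b_\Phi<\infty$, so $L^\Phi_a=\{0\}$), Theorem \ref{CXa}(ii) yields $(Ces_\Phi)_a=(Ces_\Phi)_b\cap\Delta_0$. Note that the formulation ``$\forall\lambda>0\ \exists s\in S_0\ \rho_\Phi(\lambda(f-s))<\infty$'' in the proposition is equivalent to the form used in Lemma \ref{opiscesfib} after the reparametrization $\lambda\leftrightarrow 1/\lambda$, so Case 2 ($a_\Phi=0$) follows directly from Lemma \ref{opiscesfib} combined with $\Delta_0$. For Case 3 ($a_\Phi>0$), the further simplification uses that $L^\Phi$ coincides with $L^\infty$ as a set with equivalent norms (the inclusion $L^\Phi\subset L^\infty$ follows from $b_\Phi<\infty$, and the reverse inclusion uses $a_\Phi>0$ to cope with the infinite-measure case), and hence $Ces_\Phi=Ces_\infty$; Remark \ref{Eapuste} applied to $L^\infty$ then gives $(Ces_\Phi)_b=Ces_\Phi$ on $I=[0,1]$ and $(Ces_\Phi)_b=Ces_\Phi\cap\Delta_\infty$ on $I=[0,\infty)$, and combining with $\Delta_0$ yields the stated characterization.

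For the order-continuity criterion, the hypothesis $p(L^\Phi)>1$ is, by Theorem D, exactly the boundedness of $C$ on $L^\Phi$, so Theorem \ref{glownywynik} applies and gives $Ces_\Phi\in(OC)$ if and only if $L^\Phi\in(OC)$; the latter is the classical Orlicz-space fact equivalent to $\Phi\in\Delta_2$ (with the appropriate flavour of $\Delta_2$ according to $I$, see e.g.~\cite{Ma89},\cite{Mu83}). The main technical hurdle is the forward inclusion in Case 1, i.e.\ matching the abstract ``approximation-by-$S_0$'' description of $(Ces_\Phi)_b$ with the explicit ``modular-finite-at-every-scale'' description of $C(L^\Phi_a)$ \emph{without} invoking boundedness of $C$ on $L^\Phi$. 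The convexity bound displayed above is the key tool, and $b_\Phi=\infty$ is used precisely to keep the modular of the simple approximant finite on $I=[0,\infty)$.
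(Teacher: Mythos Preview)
Your Cases 2, 3 and the order-continuity claim match the paper exactly. In Case~1 you take a slightly different route: the paper, starting from non-triviality of $Ces_\Phi$ (i.e.\ $I_\Phi(\eta_0 x^{-1}\chi_{(\lambda_0,\infty)})<\infty$ for some $\eta_0>0$), uses the substitution $u=x/\eta_0$ to upgrade this to \emph{all} $\eta>0$, concludes $x^{-1}\chi_{(\lambda_0,\infty)}\in L^\Phi_a$, and then invokes Remark~\ref{Rownowaznywarunek} (equivalently Remark~\ref{CXawarunekrownowazny1/x}) to obtain $(Ces_\Phi)_a=C(L^\Phi_a)$ directly. You instead pass through $(Ces_\Phi)_a=(Ces_\Phi)_b$ via Theorem~\ref{CXa}(i) and then identify $(Ces_\Phi)_b$ with the modular set using Lemma~\ref{opiscesfib} and the convexity split $\rho_\Phi(\mu f)\le\tfrac12\rho_\Phi(2\mu(f-s))+\tfrac12\rho_\Phi(2\mu s)$. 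Both routes are valid and need the same tail computation; the paper's is marginally shorter once Remark~\ref{CXawarunekrownowazny1/x} is in hand, while yours has the virtue of staying entirely within the $(Ces_\Phi)_b$ description already furnished by Lemma~\ref{opiscesfib}.

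One point to tighten: your claim that $\rho_\Phi(2\mu s)<\infty$ on $I=[0,\infty)$ does \emph{not} follow from $b_\Phi=\infty$ alone---for $\Phi(u)=u$ one has $b_\Phi=\infty$ yet $\int_M^\infty\Phi(c/x)\,dx=\infty$. What makes the tail finite is that $f\in(Ces_\Phi)_b$ forces $Ces_\Phi\neq\{0\}$, so by Lemma~\ref{niepustoscCX} there exist $\eta_0,\lambda_0>0$ with $\int_{\lambda_0}^\infty\Phi(\eta_0/x)\,dx<\infty$, and \emph{then} the change of variable $x\mapsto x\eta_0/c$ gives $\int^\infty\Phi(c/x)\,dx<\infty$ for every $c>0$ (the finiteness of $\Phi$ from $b_\Phi=\infty$ is used only to control the remaining bounded piece). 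This is exactly the computation the paper writes out; you should make the dependence on non-triviality explicit rather than fold it into ``$b_\Phi=\infty$ makes $\Phi$ everywhere finite''.
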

\begin{proof}
Suppose $Ces_\Phi$ is non-trivial and
\begin{enumerate}
\item $b_\Phi = \infty$. Then there exist $\lambda_0 > 0$ with $I_\Phi(\frac{\eta_0}{x}\chi_{(\lambda_0,\infty)}) < \infty$ for
some $\eta_0 > 0$. We will show that this inequality holds for all $\eta > 0$. There exist $\gamma>0$ such that  $I_\Phi(\frac{1}{x}\chi_{(\gamma,\infty)}) < \infty$. Indeed, taking $\gamma=\frac{\lambda_0}{\eta_0}$ we have
\begin{align*}
I_\Phi(\frac{1}{x}\chi_{(\gamma,\infty)})&=\int_{\frac{\lambda_0}{\eta_0}}^\infty \Phi (\frac{1}{x})\text{d} x
=\eta_0^{-1}\int_{\lambda_0}^\infty \Phi (\frac{\eta_0}{u})\text{d} u\\
&=\eta_0^{-1}\int_0^\infty \Phi (\frac{\eta_0}{u}\chi_{(\lambda_0,\infty)}(u))\text{d} u
=\eta_0^{-1}I_\Phi(\frac{\eta_0}{x}\chi_{(\lambda_0,\infty)})<\infty.
\end{align*}
Take $\eta > 0$. Then
\begin{align*}
I_\Phi(\frac{\eta}{x}\chi_{(\lambda_0,\infty)})&=\eta\int_0^\infty \Phi (\frac{1}{u}\chi_{(\frac{\lambda_0}{\eta},\infty)}(u))\text{d} u\\
&\leq  \eta\int_0^\gamma \Phi (\frac{\eta}{\lambda_0})\text{d} u+\eta\int_\gamma^\infty \Phi (\frac{1}{u}\chi_{(\frac{\lambda_0}{\eta},\infty)}(x))\text{d} u\\
&\leq\eta\int_0^\gamma \Phi (\frac{\eta}{\lambda_0})\text{d} u+\eta\int_\gamma^\infty \Phi (\frac{1}{u}\chi_{(\gamma,\infty)}(u))\text{d} u<\infty,
\end{align*}
which proves the claim. Therefore,
$\frac{1}{x}\chi_{(\lambda_0,\infty)}(x) \in L^\Phi_a$. Using Remark \ref{Rownowaznywarunek} we get
$$(Ces_\Phi)_a = C(L^\Phi_a) = \{f\in Ces_\Phi :\rho_\Phi(\lambda f) < \infty\ \text{for all}\  \lambda > 0\}.$$
\item $a_\Phi = 0$ and $b_\Phi < \infty$. In this case $L^\Phi_a = \{0\}$, thus from Theorem \ref{CXa} (ii) and Lemma \ref{opiscesfib}
we get
\begin{align*}
(Ces_\Phi)_a & = (Ces_\Phi)_b \cap \Delta_0 \\ & = \{f\in Ces_\Phi : \forall_{\lambda > 0} \exists_{s_\lambda \in S_0}\ \rho_\Phi(\lambda(f-s)) < \infty\ \text{and}\ \lim\limits_{t\rightarrow 0^{+}} Cf(t) = 0 \}.
\end{align*}
\item $a_\Phi > 0$ and $b_\Phi < \infty$. Then $L^\Phi = L^\infty$, cf. \cite[Ex. 1, \text{p.} 98]{Ma89} thus $Ces_\Phi = Ces_\infty$  and it is enough to apply Remark \ref{Eapuste}.
\end{enumerate}
Moreover, the Orlicz space $L^\Phi$ is order continuous if and only if $\Phi \in \Delta_2$, see \cite[Remark 1, p. 22]{Ma89}.
Therefore, if $p(L^\Phi)>1$ then the Ces\`aro-Orlicz function space $Ces_\Phi$ is order continuous if and only if $\varphi \in \Delta_2$ from Theorem \ref{glownywynik}.
\end{proof}

\begin{remark}
Note that $p(L^\Phi) = \alpha_\Phi$ and $q(L^\Phi) = \beta_\Phi $, where $\alpha_\Phi$ and $\beta_\Phi$ are the so-called lower
and upper Orlicz-Matuszewska indices, see \cite[Prop. 2.b.5 and Remark 2, p. 140]{LT79}) and \cite{Ma89}, \cite{Mu83} for more
information.
\end{remark}

\subsection{The Ces\`aro-Lorentz function spaces $C\Lambda_\varphi$}

The fundamental function $\varphi_E$ of every symmetric function space $E$ on $I$ is quasi-concave on $I$, that is,
$\varphi_E(t) = 0$ if and only if $t=0$, $\varphi_E(t)$ is increasing on $I$ and $\varphi_E(t)/t$ is non-increasing
for $t \in (0,m(I))$. Moreover, for any quasi-concave function $\varphi$ there is a symmetric function space whose
fundamental function is $\varphi$. The smallest symmetric function space with fundamental function $\varphi$ is
called the Lorentz function space $\Lambda_\varphi = \Lambda_\varphi(I)$ and is defined as
$$\Lambda_\varphi(I) := \{ f\in L^{0}(I) : \int_0^{m(I)} f^{*} \text{d}\varphi < \infty \},$$
with a norm given as $\norm{f}_{\Lambda_\varphi(I)} = \int_0^{m(I)} f^{*} \text{d}\varphi$.
The fundamental function of a symmetric function space $E$ is not necessary concave but $E$ can be equivalently renormed
in such a way that the resulting fundamental function is concave. In this case the Riemann-Stieltjes integral in the
definition of the Lorentz function space $\Lambda_{\varphi_E}$ my be rewritten in the form
$$\norm{f}_{\Lambda_{\varphi_E}(I)} = \norm{f}_{L^\infty(I)}\varphi_E(0^{+}) + \int_0^{m(I)}f^{*}(t)\phi_E(t)\text{d}t,$$
where $\varphi_E(t) = \int_0^t \phi_E(s)\text{d}s$.
Then we have embedding $\Lambda_{\varphi_E} \hookrightarrow E$ with embedding constant equal 1.

\begin{proposition} Let $\varphi$ be quasi-concave function. Then
$$
(C\Lambda_\varphi)_a = \left\{ \begin{array}{ll}
C\Lambda_\varphi & \textrm{if $\varphi(0^{+}) = 0, \varphi(\infty) = \infty$}\\
\{f\in C\Lambda_\varphi : \lim\limits_{t\rightarrow\infty} (Cf)^{*}(t) = 0 \} & \textrm{if $\varphi(0^{+}) = 0, \varphi(\infty) < \infty$}\\
\{f\in (C\Lambda_\varphi)_b : \lim\limits_{t\rightarrow 0^{+}} Cf(t) = 0 \} & \textrm{if $\varphi(0^{+}) > 0, \varphi(\infty) = \infty$} \\
\{f\in C\Lambda_\varphi : \lim\limits_{t\rightarrow 0^{+},\infty} Cf(t) = 0 \} & \textrm{if $\varphi(0^{+}) > 0, \varphi(\infty) < \infty$}
\end{array} \right.
$$
In particular, the Ces\`aro-Lorentz function space $C\Lambda_\varphi$ is order continuous if and only if $\varphi(0^{+}) = 0$ and $\varphi(\infty) = \infty$ (i.e. $C\Lambda_\varphi \in (OC)$ if and only if $\Lambda_\varphi \in (OC)$).
\end{proposition}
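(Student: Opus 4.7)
The proof splits into four cases according to the dichotomies $\varphi(0^+) \in \{0, > 0\}$ and $\varphi(\infty) \in \{< \infty, = \infty\}$, with Theorem \ref{CXa} as the main engine. Since the fundamental function of $\Lambda_\varphi$ is $\varphi$ itself, Theorem B gives $(\Lambda_\varphi)_a \neq \{0\}$ iff $\varphi(0^+) = 0$. Hence the two cases with $\varphi(0^+) = 0$ fall under Theorem \ref{CXa}(i), giving $(C\Lambda_\varphi)_a = (C\Lambda_\varphi)_b$, while the two with $\varphi(0^+) > 0$ fall under part (ii), giving $(C\Lambda_\varphi)_a = (C\Lambda_\varphi)_b \cap \Delta_0$. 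The third listed case is then immediate, since its right-hand side is literally $(C\Lambda_\varphi)_b \cap \Delta_0$. The remaining three cases reduce to identifying $(C\Lambda_\varphi)_b$ concretely.

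For the first case ($\varphi(0^+) = 0$, $\varphi(\infty) = \infty$), I first verify that $\Lambda_\varphi$ itself is order continuous via Theorem A(ii). The condition $\norm{g^*\chi_{[0,1/n)}}_{\Lambda_\varphi} = \int_0^{1/n} g^*\,d\varphi \to 0$ is automatic because $d\varphi$ has no atom at zero, while $\norm{g^*\chi_{[n,\infty)}}_{\Lambda_\varphi} = \int_0^\infty g^*(t+n)\,d\varphi(t) \to 0$ follows by dominated convergence after observing that $g \in \Lambda_\varphi$ together with $\varphi(\infty) = \infty$ forces $g^*(\infty) = 0$. Then Lemma \ref{CXasubsetCXa} yields $(C\Lambda_\varphi)_a \supseteq C((\Lambda_\varphi)_a) = C\Lambda_\varphi$. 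For the second case ($\varphi(0^+) = 0$, $\varphi(\infty) < \infty$), the same Theorem A(ii) analysis identifies $(\Lambda_\varphi)_a = \{g \in \Lambda_\varphi : g^*(\infty) = 0\}$, and I aim to prove $f \in (C\Lambda_\varphi)_b$ iff $Cf \in (\Lambda_\varphi)_a$. The forward direction uses that any bounded $g$ with support in $[0,N]$ satisfies $Cg(t) \leq \norm{g}_\infty N/t$ for $t > N$, hence $(Cg)^*(\infty) = 0$ and $Cg \in (\Lambda_\varphi)_a$; closedness of $(\Lambda_\varphi)_a$ passes this through limits. For the reverse direction, I truncate $f_n := (|f|\wedge n)\chi_{[0,n]}\mathrm{sgn}(f)$ and use $C|f-f_n| \leq C|f| \in (\Lambda_\varphi)_a$ together with $C|f-f_n|\to 0$ a.e.\ to invoke the order continuity of $C|f|$, forcing $\norm{C|f-f_n|}_{\Lambda_\varphi}\to 0$.

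The fourth case ($\varphi(0^+) > 0$, $\varphi(\infty) < \infty$) is the most delicate, because $\Lambda_\varphi$ fails order continuity entirely, ruling out any dominated-convergence shortcut. Theorem B gives $\Lambda_\varphi \hookrightarrow L^\infty$, together with the elementary bound $\norm{h}_{\Lambda_\varphi} = \int h^*\,d\varphi \leq \varphi(\infty)\norm{h}_{L^\infty}$, so convergence in $C\Lambda_\varphi$ forces uniform convergence of Ces\`aro images. Since any bounded $g$ with finite support satisfies $Cg(\infty) = 0$, passing to limits yields $(C\Lambda_\varphi)_b \subseteq \Delta_\infty$ and hence $(C\Lambda_\varphi)_a \subseteq \Delta_0 \cap \Delta_\infty \cap C\Lambda_\varphi$. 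The reverse inclusion is the main obstacle: given $f \in C\Lambda_\varphi$ with $Cf(0^+) = Cf(\infty) = 0$, I approximate $f$ by $f_{\epsilon, N, M} := (|f|\wedge M)\chi_{[\epsilon, N]}\mathrm{sgn}(f)$ and bound the three error pieces $C|f|\chi_{[0,\epsilon)}$, $C((|f|-M)_+\chi_{[\epsilon,N]})$ and $C|f|\chi_{(N,\infty)}$ uniformly in $t$ by $\sup_{s \leq \epsilon}Cf(s)$, $\frac{1}{\epsilon}\int_\epsilon^N (|f|-M)_+$, and $\sup_{s > N}Cf(s)$ respectively, then transfer to $\Lambda_\varphi$ via the $L^\infty$-domination. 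The assumptions $Cf(0^+) = 0$ and $Cf(\infty) = 0$ are precisely what allows the first and third uniform bounds to be driven to zero by suitable choice of $\epsilon$ and $N$, while $M$ handles the middle piece via dominated convergence in $L^1([\epsilon,N])$.

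Finally, for the ``in particular'' assertion, inspection of the four cases shows that $(C\Lambda_\varphi)_a = C\Lambda_\varphi$ (non-trivially) holds only in the first case; in each of the other cases one exhibits an element of $C\Lambda_\varphi$ violating the extra constraint (for instance $\chi_{(0,\infty)}$ or $\chi_{(0,1)}$, invoking Lemma \ref{niepustoscCX} for non-triviality when required). Combined with the identification of $\Lambda_\varphi \in (OC)$ with the conditions $\varphi(0^+) = 0$ and $\varphi(\infty) = \infty$ derived in the first case, this yields the equivalence $C\Lambda_\varphi \in (OC) \iff \Lambda_\varphi \in (OC)$.
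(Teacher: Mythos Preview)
Your proof is correct and follows the same top-level case split (via Theorem B and Theorem~\ref{CXa}) as the paper, but the execution differs noticeably. The paper's proof is citation-heavy: it pulls the order continuity of $\Lambda_\varphi$ and the description of $(\Lambda_\varphi)_a$ from \cite{KPS78} and \cite{Ko16}, and in Cases~(i)--(ii) it uses the identity $(CX)_a = C(X_a)$ from the ``in particular'' clause of Theorem~\ref{CXa} (invoking boundedness of $C$ on $\Lambda_\varphi$ in Case~(ii)) or the equivalent $\tfrac{1}{t}\chi_{(1,\infty)}\in X_a$ criterion from Remark~\ref{CXawarunekrownowazny1/x}; in Case~(iv) it simply observes $\Lambda_\varphi = L^\infty$ and reduces to the already-treated $Ces_\infty$ case (Remark~\ref{Eapuste}).

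You instead derive everything by hand: you get $(\Lambda_\varphi)_a$ and its order continuity directly from Theorem~A(ii), and in Cases~(ii) and~(iv) you identify $(C\Lambda_\varphi)_b$ via explicit truncation-and-approximation arguments rather than passing through $(CX)_a = C(X_a)$ or the $Ces_\infty$ reduction. The benefit of your route is self-containment---in particular, your Case~(ii) does not rely on $C$ being bounded on $\Lambda_\varphi$, an assumption the paper invokes without justification for that case. The cost is length: your Case~(iv) essentially reproves by hand the $(Ces_\infty)_a$ description that the paper already has available, since you have effectively established $\Lambda_\varphi = L^\infty$ via the two-sided embedding and could simply cite Remark~\ref{Eapuste} at that point.
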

\begin{proof}
We can assume that $C\Lambda_\varphi$ is non-trivial because if $C\Lambda_\varphi = \{0\}$ there is noting to prove.
To prove the first part we have to consider the following cases. Suppose
\begin{enumerate}
\item $\varphi(0^{+}) = 0$ and $\varphi(\infty) = \infty$. It follows from \cite[Lemma 5.1]{KPS78} and
Theorem A (iv) that $\Lambda_\varphi \in (OC)$. Moreover, from \cite[Corollary 4.13]{Ko16} we have
$\frac{1}{x}\chi_{(1,\infty)}(x) \in (\Lambda_\varphi)_a$ so using Remark \ref{CXawarunekrownowazny1/x}
we obtain $(C\Lambda_\varphi)_a = C(\Lambda_\varphi)_a = C\Lambda_\varphi$.
\item $\varphi(0^{+}) = 0$ and $\varphi(\infty) < \infty$. In this situation $\Lambda_\varphi \notin (OC)$
but $(\Lambda_\varphi)_a$ is non-trivial. More precisely, 
$$(\Lambda_\varphi)_a = \{ f\in \Lambda_\varphi : \lim\limits_{t\rightarrow\infty} f^{*}(t) = 0 \},$$
see \cite[Corollary 4.13]{Ko16}. Since $C$ is bounded, using Theorem \ref{CXa} we get
$$(C\Lambda_\varphi)_a = C(\Lambda_\varphi)_a = \{f\in C\Lambda_\varphi : \lim\limits_{t\rightarrow\infty} (Cf)^{*}(t) = 0 \}.$$
\item $\varphi(0^{+}) > 0$ and $\varphi(\infty) = \infty$. From Theorem B we get $(\Lambda_\varphi)_a = \{0\}$ and
it is enough to use Theorem \ref{CXa} (ii).
\item $\varphi(0^{+}) > 0$ and $\varphi(\infty) < \infty$. In this case $\Lambda_\varphi = L^\infty$, see \cite[p. 108]{KPS78} and we can use Remark \ref{Eapuste}.
\end{enumerate}
The second part is a direct consequence of the foregoing considerations.
\end{proof}

\subsection{The Ces\`aro-Marcinkiewicz function spaces $CM_\varphi$}

For any quasi-concave function $\varphi$ on $I$ the Marcinkiewicz function space (called also weak Lorentz space) $M_\varphi = M_\varphi(I)$ is defined as
$$M_\varphi(I) := \{ f \in L^{0}(I) : \sup\limits_{t\in I} \varphi(t)f^{**}(t) < \infty \},$$
with a norm $\norm{f}_{M_\varphi(I)} = \sup_{t\in I} \varphi(t)f^{**}(t)$, where $f^{**}(t) := \frac{1}{t}\int_0^t f^{*}(s)\text{d}s$
is a maximal function. The Marcinkiewicz function space $M_\varphi$
is the largest symmetric function space with fundamental function $\varphi$, i.e. for any symmetric function space
$E$ we have $E \hookrightarrow M_{\varphi_E}$ with embedding constant equal 1.



\begin{proposition} Let $\varphi$ be quasi-concave function. Then
$$
(CM_\varphi)_a = \left\{ \begin{array}{ll}
\{f\in CM_\varphi : \lim\limits_{t\rightarrow 0^{+},\infty} \varphi(t)(Cf)^{**}(t) = 0 \} & \textrm{if $\varphi(0^{+}) = 0, p(M_\varphi)>1$}\\
(CM_\varphi)_b & \textrm{if $\varphi(0^{+}) = 0, p(M_\varphi)=1$}\\
\{f\in (CM_\varphi)_b : \lim\limits_{t\rightarrow 0^{+}} Cf(t) = 0 \} & \textrm{if $\varphi(0^{+}) > 0, \varphi(\infty) = \infty$} \\
\{f\in CM_\varphi : \lim\limits_{t\rightarrow 0^{+},\infty} Cf(t) = 0 \} & \textrm{if $\varphi(0^{+}) > 0, \varphi(\infty) < \infty$}
\end{array} \right.
$$
In particular, if we assume that $p(M_\varphi)>1$ then the non-trivial Ces\`aro-Marcinkiewicz function space $CM_\varphi$ is never order continuous.
\end{proposition}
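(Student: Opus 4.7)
The plan is to reduce every case to the abstract description of $(CX)_a$ furnished by Theorem \ref{CXa}, using the fact that the fundamental function of the Marcinkiewicz space $M_\varphi$ is $\varphi$ itself. By Theorem B this means $(M_\varphi)_a\neq\{0\}$ exactly when $\varphi(0^+)=0$, and this dichotomy drives the splitting into two regimes.

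If $\varphi(0^+)=0$, Theorem \ref{CXa}(i) yields $(CM_\varphi)_a=(CM_\varphi)_b$ without any extra hypothesis, which already settles Case 2. When additionally $p(M_\varphi)>1$, Theorem D makes the Ces\`aro operator bounded on $M_\varphi$, so the final part of Theorem \ref{CXa} upgrades the identity to $(CM_\varphi)_a=C((M_\varphi)_a)$. To land on the stated form I would invoke the classical description
\[
(M_\varphi)_a=\bigl\{g\in M_\varphi:\lim_{t\to 0^+}\varphi(t)g^{**}(t)=\lim_{t\to\infty}\varphi(t)g^{**}(t)=0\bigr\},
\]
valid when $\varphi(0^+)=0$, and substitute $g=C|f|$. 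This yields Case 1, modulo the convention that $(Cf)^{**}$ in the proposition is read as $(C|f|)^{**}$ (which is harmless since $(CM_\varphi)_a$ depends only on $|f|$).

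If instead $\varphi(0^+)>0$, Theorem B gives $(M_\varphi)_a=\{0\}$ and Theorem \ref{CXa}(ii) reduces the problem to $(CM_\varphi)_a=(CM_\varphi)_b\cap\Delta_0$, which is literally Case 3 when $\varphi(\infty)=\infty$. For Case 4, quasi-concavity of $\varphi$ together with positive two-sided bounds forces $\varphi$ to be comparable with a constant, hence $M_\varphi=L^\infty$ (with equivalent norms) and $CM_\varphi=Ces_\infty$; the desired formula is then exactly Remark \ref{Eapuste}, which in particular absorbs the $(CM_\varphi)_b$-intersection into the double-sided vanishing condition for $Cf$.

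The concluding claim follows at once from Theorem \ref{glownywynik}: under $p(M_\varphi)>1$, order continuity of $CM_\varphi$ would force $M_\varphi\in(OC)$, which fails whenever $M_\varphi$ is non-trivial. Indeed, if $\varphi(0^+)>0$ then $(M_\varphi)_a=\{0\}$ by Theorem B; and if $\varphi(0^+)=0$ one can construct a non-OC element by taking a decreasing $g^*$ comparable to $1/\varphi$ in the relevant range, where $p(M_\varphi)>1$ (via Theorem C) guarantees $g\in M_\varphi$ while $\varphi(t)g^{**}(t)\geq \varphi(t)g^*(t)=1$ prevents vanishing at the endpoints. The principal technical obstacle I expect is verifying the displayed characterization of $(M_\varphi)_a$ used in Case 1: translating $\norm{g\chi_{A_n}}_{M_\varphi}\to 0$ for $A_n\downarrow\emptyset$ into the pointwise vanishing of $\varphi(t)g^{**}(t)$ at $0$ and $\infty$ via Theorem A(ii) is a classical but slightly technical manipulation involving the interplay between the decreasing rearrangement, the maximal function, and the supremum defining the Marcinkiewicz norm; all the remaining cases reduce directly to the machinery built in Section 3.
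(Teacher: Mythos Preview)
Your proposal is correct and follows essentially the same route as the paper: the four cases are dispatched exactly as the authors do, via Theorem~\ref{CXa}(i)--(ii), Theorem~D, Theorem~B, the cited description of $(M_\varphi)_a$ (which the paper takes from \cite{KL04} rather than proving), and the identification $M_\varphi=L^\infty$ together with Remark~\ref{Eapuste} in the last case. Your extra justification for the ``never OC'' clause via Theorem~\ref{glownywynik} is more explicit than what the paper writes (the authors leave it implicit in the case analysis), though your sketch of a non-OC element $g^\ast\sim 1/\varphi$ would need a genuine check that such a $g$ lies in $M_\varphi$; the invocation of Theorem~C does not directly yield this.
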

\begin{proof}
Let us consider the following cases:
\begin{enumerate}
\item $\varphi(0^{+}) = 0$ and $p(M_\varphi)>1$. Operator $C$ is bounded from Theorem D and
$$(M_\varphi)_a = \{f \in M_\varphi : \lim\limits_{t\rightarrow 0^{+},\infty} \varphi(t)f^{**}(t) = 0 \},$$
see \cite[Def. 1.3 and Th. 1.3]{KL04} so applying Theorem \ref{CXa} we get
$$(CM_\varphi)_a = C(M_\varphi)_a = \{f \in CM_\varphi : \lim\limits_{t\rightarrow 0^{+},\infty} \varphi(t)(Cf)^{**}(t) = 0 \}.$$
\item $\varphi(0^{+}) = 0$ and $p(\Lambda_\varphi)=1$. Because $(M_\varphi)_a \neq \{0\}$ and the Ces\`aro
operator $C$ is not bounded on $M_\varphi$ we can use Theorem \ref{CXa} (i).
\item $\varphi(0^{+}) > 0$ and $\varphi(\infty) = \infty$. In this situation $(M_\varphi)_a$ is trivial in view of Theorem B,
so we simply use Theorem \ref{CXa} (ii).
\item $\varphi(0^{+}) > 0$ and $\varphi(\infty) < \infty$. It is easy to see that $M_\varphi = L^\infty$. Indeed, inclusion $M_\varphi \hookrightarrow L^\infty$ is a consequence of Theorem B. Now, if $f \in L^\infty$ then $f^{**}$ also is a bounded function and
$$\norm{f}_{M_\varphi} = \sup_{t\in I}\varphi(t)f^{**}(t) \leq \norm{f^{**}}_{L^\infty(I)}\norm{\varphi}_{L^\infty(I)} < \infty,$$ since $\varphi(\infty) < \infty$. Therefore also $L^\infty \hookrightarrow M_\varphi$. From Remark \ref{Eapuste} we get
$$(CM_\varphi)_a = (Ces_\infty)_a = \{f\in CM_\varphi : \lim\limits_{t\rightarrow 0^{+},\infty} Cf(t) = 0 \},$$
and the proof is finished.
\end{enumerate}
\end{proof}

\begin{question}
It seems that non-trivial Ces\`aro-Marcinkiewicz function space $CM_\varphi$ is never order continuous but this is
only our conjecture.
\end{question}

\subsection{The spaces $C(L^1\cap L^\infty)$ and $C(L^1 + L^\infty)$} The spaces $L^1\cap L^\infty$ and $L^1 + L^\infty$ occupy a special place
in the theory of symmetric spaces because they are respectively the largest and the smallest of all symmetric function spaces, i.e.
$L^1\cap L^\infty \hookrightarrow E[0,\infty) \hookrightarrow L^1 + L^\infty$ and $L^\infty \hookrightarrow E[0,1] \hookrightarrow L^1$, cf. \cite[Th. 6.6 and Th. 6.7, pp. 77-78]{BS88},
$\norm{f}_{L^1\cap L^\infty} = \max\{\norm{f}_{L^1},\norm{f}_{L^\infty}\}$,
 and $$\norm{f}_{L^1 + L^\infty} =\inf\{ \norm{g}_{L^1}+\norm{h}_{L^\infty} : f = g + h,\ f\in L^1\ \text{and}\ h\in L^\infty \}
= \int_0^1 f^{*}(t)\text{d}t,$$
cf. \cite[Th. 6.2 and Th. 6.4, p. 74 and 76]{BS88}.

The case of $L^1\cap L^\infty$ is not very interesting
because if $I = [0,\infty)$ then $C(L^1\cap L^\infty) = \{0\}$ and if $I = [0,1]$ then $L^\infty \hookrightarrow L^1$ so
$L^1 \cap L^\infty = L^\infty$ and $L^1 + L^\infty = L^1$.
Therefore for $I = [0,1]$ we have $C(L^1 \cap L^\infty) = C(L^\infty) = Ces_\infty$ and $C(L^1 + L^\infty) = C(L^1) = Ces_1$ - these cases were already discussed in the Section 4.1.

\begin{proposition}
$(C(L^1 + L^\infty))_a=\{f\in C(L^1 + L^\infty): (Cf)^*(\infty)=0\}$.
\end{proposition}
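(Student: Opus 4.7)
The plan is to recognize the statement as a direct application of Remark \ref{CXawarunekrownowazny1/x} for the symmetric space $X = L^1+L^\infty$. Three ingredients are needed: a description of $X_a$, verification that $\frac{1}{t}\chi_{(\lambda_0,\infty)} \in X_a$ for some $\lambda_0 > 0$, and an unwinding of the resulting equality $(CX)_a = C(X_a)$ into the form stated in the proposition.

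First I would describe the order continuous part of $L^1+L^\infty$. Using the explicit formula $\norm{g}_{L^1+L^\infty} = \int_0^1 g^*(t)\,dt$, Theorem A (ii) translates $g \in (L^1+L^\infty)_a$ into the two tail conditions
\begin{equation*}
\int_0^{1/n} g^*(t)\,dt \to 0 \quad\text{and}\quad \int_0^{1} g^*(t+n)\,dt \to 0,
\end{equation*}
where the second uses the elementary distribution-function calculation showing that the decreasing rearrangement of $g^*\chi_{[n,\infty)}$ is $s \mapsto g^*(s+n)$. The first is automatic, by absolute continuity of the Lebesgue integral (since $g^*|_{[0,1]} \in L^1[0,1]$ for every $g \in L^1+L^\infty$), while the second is equivalent to $g^*(\infty) = 0$ by monotone convergence. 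Hence
\begin{equation*}
(L^1+L^\infty)_a = \{g \in L^1+L^\infty : g^*(\infty)=0\},
\end{equation*}
and this subspace is non-trivial.

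Next I would verify the hypothesis of Remark \ref{CXawarunekrownowazny1/x}: the function $h(t) = \frac{1}{t}\chi_{(1,\infty)}(t)$ has rearrangement $h^*(s) = \frac{1}{s+1}$, which vanishes at infinity, so $h \in (L^1+L^\infty)_a$. Since $(L^1+L^\infty)_a \neq \{0\}$, the remark applies and yields $(C(L^1+L^\infty))_a = C((L^1+L^\infty)_a)$. Unwinding the definition, $f \in C((L^1+L^\infty)_a)$ iff $C|f| \in (L^1+L^\infty)_a$, which by the first step is equivalent to $(C|f|)^*(\infty)=0$ -- i.e.\ exactly the condition $(Cf)^*(\infty)=0$ in the sense used throughout the abstract Ces\`aro construction.

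The one delicate point is that the Ces\`aro operator is \emph{not} bounded on $L^1+L^\infty$ (its lower Boyd index equals $1$), so neither Theorem \ref{glownywynik} nor the boundedness-based formulation of Theorem \ref{CXa} applies. The reason for going through Remark \ref{CXawarunekrownowazny1/x} is precisely to circumvent this: as observed in Remark \ref{Rownowaznywarunek}, the argument of Lemma \ref{CXbsubsetCXa} in fact needs only the weaker hypothesis $\frac{1}{t}\chi_{(\lambda_0,\infty)} \in X_a$, and this is what has been verified by hand for $X = L^1+L^\infty$.
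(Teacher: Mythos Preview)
Your proposal is correct and follows essentially the same route as the paper: characterize $(L^1+L^\infty)_a = \{g : g^*(\infty)=0\}$ via Theorem~A~(ii), observe that $\frac{1}{t}\chi_{(1,\infty)}$ lies in this subspace, and then invoke Remark~\ref{Rownowaznywarunek}/\ref{CXawarunekrownowazny1/x} to conclude $(CX)_a = C(X_a)$, correctly noting that boundedness of $C$ on $L^1+L^\infty$ fails and is not needed. The only differences are expository --- the paper establishes the inclusion $\{g : g^*(\infty)=0\} \subset (L^1+L^\infty)_a$ by contrapositive rather than by your direct monotone-convergence computation of $\int_0^1 g^*(s+n)\,ds$, and it cites Remark~\ref{Rownowaznywarunek} directly rather than its restatement in Remark~\ref{CXawarunekrownowazny1/x}.
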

\begin{proof}
We claim that\begin{equation}\label{eq: 1}
(L^1 + L^\infty)_a=\{f\in L^1 + L^\infty : f^*(\infty)=0\}. 
\end{equation}
This characterization is well known (see e.g. note in Section 2 in \cite{KL04} and references therein) but we will give a short proof for the sake of completeness. Inclusion $\subset$ is a direct consequence of Theorem A (ii). For the reverse inclusion $\supset$ take $f\in L^1+L^\infty$ and
observe that
\begin{align*}
\norm{f^{*}\chi_{[0,1)}}_{L^1}  & = \int_0^1 (f^{*}\chi_{[0,1)})^{*}(t)\text{d}t\\
 &= \int_0^1 f^{*}(t)\text{d}t  = \norm{f}_{L^1 + L^\infty} < \infty.
\end{align*}
Since $f^{*}\chi_{[0,\frac{1}{n}]} \rightarrow 0$ almost everywhere and is dominated by integrable function thus, from Lebesgue dominated convergence theorem $$\norm{f^{*}\chi_{[0,\frac{1}{n}]}}_{L^1 + L^\infty}=\norm{f^{*}\chi_{[0,\frac{1}{n}]}}_{L^1} \rightarrow 0,$$ as $n \rightarrow \infty$. Therefore, if $f \notin (L^1 + L^\infty)_a$ then according to Theorem A (ii) we get  $$\norm{f^{*}\chi_{[n,\infty)}}_{L^1 + L^\infty} \nrightarrow 0.$$ Passing to subsequence if necessary we can assume that 
$\norm{f^{*}\chi_{[n,\infty)}}_{L^1 + L^\infty} \geq \delta$ for some $\delta >0$. We have
\begin{align*}
f^{*}(n) & \geq \int_n^{n+1}f^{*}(t)\text{d}t = \int_0^1 (f^{*}\chi_{[n,\infty)})^{*}(t)\text{d}t \\
                                            & = \norm{f^{*}\chi_{[n,\infty)}}_{L^1 + L^\infty} \geq \delta > 0,
\end{align*}
so $f^{*}(\infty) > 0$ and the claim follows. 
Note that from (\ref{eq: 1}) $$\frac{1}{t}\chi_{[1,\infty)}(t)\in (L^1 + L^\infty)_a.$$ Using Remark 13 we obtain $(C(L^1 + L^\infty))_a=C((L^1 + L^\infty)_a)$ and the proof is complete.
\end{proof}

\newpage

\bigskip

(Tomasz Kiwerski) Faculty of Mathematics, Computer Science and Econometrics,
University of Zielona G\`{o}ra,
prof. Z. Szafrana 4a, 65-516 Zielona G\`{o}ra, Poland

\textit{e-mail address}: \verb|tomasz.kiwerski@gmail.com|

\bigskip

(Jakub Tomaszewski) Institute of Mathematics,
Faculty of Electrical Engineering,
Pozna\'{n} University of Technology,
Piotrowo 3A, 60-965 Pozna\'{n}, Poland

\textit{e-mail address}: \verb|jakub.tomaszewski42@gmail.com|

\end{document}